\theoremstyle{plain}
\newtheorem{theo}{Theorem}[section]
\newtheorem{lem}[theo]{Lemma}
\newtheorem{prop}[theo]{Proposition}
\theoremstyle{definition}
\newtheorem{definition}[theo]{Definition}
\theoremstyle{remark}
\newtheorem{rem}[theo]{Remark}
\numberwithin{equation}{section}
\title{{Reconstruction of a small acoustic inclusion \\via Time-dependent Polarization Tensors\thanks{\footnotesize This work was supported by the SNF grant 200021-172483.}}}
\author{{{Lorenzo Baldassari}} \and {{Andrea Scapin}}}
\date{}
\begin{document}
	\setcounter{section}{0}
	\setcounter{secnumdepth}{2}	
	\maketitle 
	\thispagestyle{empty}
	
\begin{abstract}
This paper aims at introducing the concept of time-dependent polarization tensors (TDPTs) for the wave equation associated to a diametrically small acoustic inclusion, with constitutive parameters different from those of the background and size smaller than the operating wavelength. Firstly, the solution to the Helmholtz equation is considered, and a rigorous systematic derivation  of a complete asymptotic expansion of the scattered field due to the presence of the inclusion is presented. Then, by applying the Fourier transform, the corresponding time-domain expansion is readily obtained after truncating the high frequencies. The new concept of TDPTs is shown to be promising for performing imaging. In particular, the optimization approach proposed in \cite{FDPTs} is extended to TDPTs. Numerical simulations are driven, showing that the TDPTs reconstructed from noisy measurements allow to image fine shape details of the inclusion.

	\medskip
	
	\noindent \textbf{Keywords and phrases} Polarization tensors, time dependent polarization tensor, asymptotic expansions, shape recovery, target reconstruction, wave imaging. 
	
	\medskip
	
	\noindent\textbf{Mathematics Subject Classification} 35J05, 35B30, 35C20.
	
\end{abstract}
	
\section{Introduction} 
%
%
Experimental data suggest that bats use temporal information for most, if not all, perceptual tasks. The bat perceives the phase of the sounds, which cover the 25- to 100-kilohertz frequency range, as these are represented in the auditory system after peripheral transformation. The acoustic image of a sonar target is apparently derived from \text{time-domain} or periodicity information processing by the nervous system \cite{Bat perception, Bat echolocation}. It appears natural then to extend the shape reconstruction and classification methods proposed in \cite{FDPTs, echolocation, target classification} to the time-domain.

Ammari et al. in \cite{FDPTs} introduced the concept of frequency-dependent polarization tensors (FDPTs) for a small inclusion. These tensors encode relevant information on the inclusion and appear naturally when we describe the perturbation of echoes emitted by animals such bats and dolphins \cite{Bat perception, Bat echolocation}. The extraction of the high-order FDPTs can be achieved from multi-static response (MSR) measurements \cite{FDPTs, echolocation}.

In this paper we deal with the problem of reconstructing a small acoustic inclusion by using the new concept of time-dependent polarization tensors (TDPTs) for the wave equation. The TDPTs can be interpreted as an extension of the concept of the FDPTs to the time-domain. 

Our aim in this paper is to model the problem of a static bat which is sending a wave and recording the scattering echoes due to the presence of an acoustic inclusion. This problem has to be evaluated in the frequency-domain first. Based on the layer potential techniques in \cite{layerpotentialtech}, we derive an asymptotic expansion for the scattered field in terms of the FDPTs. Such asymptotic expansion is based on careful and precise estimates of the remainders with respect to the frequency. In particular, in the two-dimensional case, thorough estimates are needed due to the logarithmic singular behavior of the Hankel function at the origin \cite{Abramowitz, Lebedev, Mat, Watson}. We require the inclusion to be small compared to the wavelength. In such a situation it is possible to expand the solution of the wave equation around the background solution \cite{boundary layer}. Recall that high frequencies correspond exactly to small wavelengths. The idea is then to truncate the high frequencies, as in \cite{Transient, Numerical Transient}. This corresponds to the case of a constant frequency (CF) bat, which cannot hear all the frequencies outside a certain range of finite values \cite{Bat perception}. By applying the truncated Fourier transform to the frequency-dependent asymptotic expansion, we switch to the time-domain. The TDPTs are defined as the building blocks of the corresponding time-dependent asymptotic expansion. Note that the leading-order term in this asymptotic formula has been derived by {Ammari et al.} \cite{Transient,Numerical Transient} for the three-dimensional case.


This paper is organized as follows. In Section 2, we recall the definitions of the boundary layer potentials in dimension $d = 2,3$, and state some basic results that are used throughout the paper. In Section 3, we describe the mathematical model concerning the Helmholtz equation, which we rewrite as a transmission problem, providing a representation formula for the solution. In Section 4, 5 and 6 we perform the derivation of asymptotic expansions in dimension $d = 2$. In particular, Section 4 is devoted to the proof of a stability estimate for the two-dimensional transmission problem, which is used in Section 5 to estimate the remainder of the expansion obtained in the frequency-domain, in terms of the operating frequency $\omega$. In Section 6, an expansion for the two-dimensional transient wave equation is presented. This asymptotic formula in time-domain is written in terms of the new concept of time-dependent polarization tensors (TDPTs). In Section 7, we show that high-order TDPTs allow to reconstruct both the volume and the material property of a small inclusion. It is worth mentioning that, in our framework, these information can be separated and retrieved without using a near-field expansion \cite{Transient, Numerical Transient}. Furthermore, we adapt a well-known procedure for reconstructing fine shape details of the inclusion by using the TDPTs. This algorithm consists of a recursive optimization of a functional based on its shape derivative \cite{GPT, GPT2, FDPTs}. 

In Section 8, we perform numerical experiments in the two-dimensional case to validate the usage of the TDPTs. The reconstruction procedures of Section 7 are tested for different inclusions and acquisition settings, and the results are reported. In particular, we observe that the optimization algorithm performs well in recovering the boundary of the inclusion even with moderate level of noise.


The results analogous to that of Sections 4, 5 and 6 are presented for the three-dimensional case in Appendix \ref{appendix1}.

\vspace{1mm}

\section{Preliminary results} 
Before introducing our problem we recall some basic facts about the boundary layer potentials that we use repeatedly in the sequel. 

Let us denote by $\Gamma_{\omega}$ the outgoing fundamental solution to the Helmoltz operator $\Delta +\omega^2$ in  $\mathbb{R}^d$, that is \cite{biomedicalimaging}
\begin{equation} \label{eq:fund_sol} \Gamma_{\omega}(x) := \begin{cases} \displaystyle \frac{i}{4} H_0^{(1)}(\omega|x|) ,  & d = 2, \\ \displaystyle \frac{e^{i\omega|x|}}{4\pi |x|} ,  & d = 3 . \end{cases} \end{equation}
Here, $H_0^{(1)}$ denotes the Hankel function of the first kind of order zero. We also consider $\Gamma_0$ defined by
\begin{equation*} \label{eq:fund_sol_laplace} \Gamma_{0}(x) := \begin{cases} \displaystyle   \frac{1}{2\pi} \log |x| ,  & d = 2 ,\\ \displaystyle \frac{1}{4 \pi | x |} , & d = 3. \end{cases} \end{equation*}
Note that $\Gamma_\omega$ solves (in the sense of distributions) the equation 
\[ (\Delta + \omega^2)\Gamma_{\omega} = - \delta_0 \quad \mbox{ in } \mathbb{R}^d , \quad d = 1, 2 .\]

Let $D$ be a bounded Lipschitz domain in $\mathbb{R}^d$. The single- and double-layer potentials on $D$, $\mathcal{S}_{D}^{\omega}$ and $\mathcal{D}_{D}^{\omega}$, are defined as follows:  $\phi \in L^2(\partial D)$,  
\begin{align*}
\mathcal{S}_D^{\omega}[\phi](x) &= \int_{\partial D} \Gamma_{\omega} (x-s) \phi(s) \; d\sigma(s),\\
\mathcal{D}_D^{\omega}[\phi](x) &= \int_{\partial D} \frac{\partial\Gamma_{\omega}}{\partial \nu_s} (x-s) \phi(s) \; d\sigma(s) .
\end{align*}
 The behavior of $\mathcal{S}_D^{\omega}[\phi]$ across the boundary $\partial D$ is described by the following well-known formulas \cite{math and stat method}.
\begin{lem}	For $\phi \in L^2(\partial D)$,
\begin{align*}
	\mathcal{S}^\omega_D[\phi]|_{+} (x) &=\mathcal{S}^\omega_D[\phi]|_{-} (x), \  \text{for a.e. } \ x \in \partial D, 
\\
	\left. \frac{\partial \mathcal{S}^\omega_D[\phi]}{\partial \nu} \right|_{\pm} (x)& = \left(\pm \frac{1}{2}\mathcal{I} +(\mathcal{K}^\omega_D)^* \right) [\phi] (x), \ \text{for a.e. } \ x \in \partial D, 
\end{align*}
	where $\mathcal{I}$ is the identity operator and $(\mathcal{K}^\omega_D)^*$ is defined by
	$$
	(\mathcal{K}^\omega_D)^*[\phi](x) = \int_{\partial D} \frac{\partial \Gamma_\omega (x-y)}{\partial \nu_x} \phi(y) \; d\sigma(y).
	$$  
	Note that $(\mathcal{K}^\omega_D)^*$ is the $L^2$-adjoint of $\mathcal{K}^\omega_D$, with
	$$
	\mathcal{K}^\omega_D[\phi](x) = \int_{\partial D} \frac{\partial \Gamma_\omega (x-y)}{\partial \nu_y} \phi(y) \; d\sigma(y).
	$$  
\end{lem}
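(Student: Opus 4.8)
The plan is to reduce the Helmholtz jump relations to the classical ones for the Laplace single-layer potential, exploiting the fact that $\Gamma_\omega$ and $\Gamma_0$ share the same leading singularity at the origin. Writing $R_\omega := \Gamma_\omega - \Gamma_0$, one checks that in $d=3$ the function $R_\omega(x) = (e^{i\omega|x|}-1)/(4\pi|x|)$ extends to a real-analytic function on $\mathbb{R}^3$, while in $d=2$ the kernel $\tfrac{i}{4}H_0^{(1)}(\omega|x|)$ carries the same logarithmic singularity as $\Gamma_0$ near the origin, so that $R_\omega$ is continuous and bounded there. In both cases $R_\omega$ is regular enough that the associated operator $\mathcal{R}_D^\omega[\phi](x) := \int_{\partial D} R_\omega(x-s)\phi(s)\,d\sigma(s)$ and its normal derivative have kernels extending continuously up to $\partial D$ from either side; hence both $\mathcal{R}_D^\omega[\phi]$ and $\partial_\nu \mathcal{R}_D^\omega[\phi]$ are continuous across $\partial D$ and produce no jump.

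With the decomposition $\mathcal{S}_D^\omega = \mathcal{S}_D^0 + \mathcal{R}_D^\omega$ in hand, the continuity of $\mathcal{S}_D^\omega[\phi]$ across $\partial D$ follows at once: the weakly singular, integrable kernel $\Gamma_0(x-s)$ makes $\mathcal{S}_D^0[\phi]$ continuous through $\partial D$ by the classical theory, and the remainder $\mathcal{R}_D^\omega[\phi]$ is continuous by the step above. I would establish the continuity of $\mathcal{S}_D^0[\phi]$ for $\phi \in L^2(\partial D)$ by first proving it for continuous densities, where the weak singularity of $\log|x-s|$ (resp. $|x-s|^{-1}$) yields a uniformly convergent integral, and then extending by density using the boundedness of the single-layer operator on the relevant trace spaces.

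For the normal-derivative relation I would invoke the classical Laplace jump formula $\partial_\nu \mathcal{S}_D^0[\phi]|_\pm = (\pm\tfrac{1}{2}\mathcal{I} + (\mathcal{K}_D^0)^*)[\phi]$ and then add back the jump-free contribution of $R_\omega$. The $\pm\tfrac{1}{2}$ discontinuity arises solely from the principal part of the kernel near the singularity and is computed by the standard argument: for $x$ approaching a point of $\partial D$ along the normal, one isolates a small cap around the singularity and evaluates $\int_{\partial D} \partial_{\nu_x}\Gamma_0(x-y)\,d\sigma(y)$ through a Gauss/solid-angle identity, which produces the half-jump $\pm\tfrac{1}{2}$, while the complementary integral converges to the principal-value operator $(\mathcal{K}_D^0)^*[\phi]$. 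Since $\partial_\nu \mathcal{R}_D^\omega[\phi]$ is continuous across $\partial D$ and equals the integral with kernel $\partial_{\nu_x} R_\omega(x-y)$, adding it to $(\mathcal{K}_D^0)^*$ recombines exactly into $(\mathcal{K}_D^\omega)^*$, which gives the asserted formula.

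The main obstacle is the two-dimensional case. There I must isolate the logarithmic singularity of the Hankel function with enough precision to guarantee that $R_\omega$, and crucially its gradient, are regular enough across $\partial D$ to contribute no jump; the subleading terms of $H_0^{(1)}$ carry $|x|^2\log|x|$-type factors whose derivatives have to be controlled near the origin. This is exactly the logarithmic-singular behavior flagged in the introduction, and handling it rests on the standard small-argument asymptotic expansions of $H_0^{(1)}$. Once these regularity estimates for $R_\omega$ are secured, the reduction to the Laplace case is routine and both identities follow.
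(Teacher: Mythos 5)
The paper offers no proof of this lemma at all: it records the jump relations as ``well-known formulas'' and cites \cite{math and stat method}, so your attempt can only be measured against the standard literature argument. Your architecture --- write $\Gamma_\omega=\Gamma_0+R_\omega$, invoke the harmonic jump relations, and show that the remainder operator is jump-free so that its kernel recombines with $\partial_{\nu_x}\Gamma_0$ into $(\mathcal{K}^\omega_D)^*$ --- is exactly that standard route. The genuine gap is in how you propose to supply the harmonic ingredients. The lemma is stated for a bounded \emph{Lipschitz} domain, densities $\phi\in L^2(\partial D)$, and limits holding for a.e.\ $x\in\partial D$ (nontangential limits). The arguments you sketch --- the Gauss/solid-angle computation of the $\pm\tfrac12$ term, and continuity of $\mathcal{S}^0_D[\phi]$ proved for continuous densities and then ``extended by density'' --- are the classical $C^{1,\alpha}$-boundary, continuous-density arguments. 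In the Lipschitz/$L^2$ setting they do not suffice: the a.e.\ existence of the principal value $(\mathcal{K}^0_D)^*[\phi]$, the a.e.\ existence of nontangential limits of $\nabla\mathcal{S}^0_D[\phi]$, and the jump formulas themselves are theorems of Coifman--McIntosh--Meyer and Verchota, and they cannot be reached by density alone: to pass a.e.\ pointwise boundary limits through an $L^2$-approximation $\phi_n\to\phi$ you need an $L^2(\partial D)$ bound on the nontangential maximal function of $\nabla\mathcal{S}^0_D[\phi]$ in terms of $\|\phi\|_{L^2(\partial D)}$, and that maximal estimate is precisely the hard content of the result you are trying to prove. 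As written, your proposal proves the lemma for smooth boundaries; for the statement in the generality the paper needs, you must import the Lipschitz singular-integral theory rather than rebuild it from the smooth-domain toolkit.

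Two further local claims need repair, though they are fixable. First, in $d=3$ the remainder $R_\omega(x)=(e^{i\omega|x|}-1)/(4\pi|x|)$ is \emph{not} real-analytic at the origin: its expansion $\frac{i\omega}{4\pi}-\frac{\omega^2|x|}{8\pi}+\cdots$ contains odd powers of $|x|$, so $\nabla R_\omega$ is bounded but discontinuous at $0$, and the kernel $\partial_{\nu_x}R_\omega(x-y)$ does not extend continuously to the diagonal; what actually closes this step is boundedness of the kernel plus dominated convergence, not continuous extension, and you should state it that way. Second, in $d=2$, taking the paper's formulas literally, the cancellation you assert is false: $\tfrac{i}{4}H_0^{(1)}(\omega|x|)=-\tfrac{1}{2\pi}\log|x|+O(1)$ near the origin, which has the \emph{opposite} sign to the paper's $\Gamma_0=\tfrac{1}{2\pi}\log|x|$, so $R_\omega$ would still be logarithmically singular and your decomposition would collapse. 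The singular parts cancel only under the convention $\Gamma_\omega=-\tfrac{i}{4}H_0^{(1)}(\omega|\cdot|)$ (equivalently, after flipping the sign of $\Gamma_0$); this is the convention the paper implicitly uses elsewhere, as the constant $\beta_{\varepsilon\omega}$ in Lemma \ref{t0} is consistent only with it. Since your entire two-dimensional argument rests on exact cancellation of the logarithmic singularities, you must make this normalization explicit rather than assert that the kernels as stated match.
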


We also recall the following lemma \cite{boundary layer, Low freq, Interface}.
\begin{lem}\label{t0}
	There exists $\varepsilon_0>0$ such that for $\varepsilon \omega<\varepsilon_0$,
	
	\begin{enumerate}\item[a)] if $d = 2$
	\begin{align*}
	\left\|\mathcal{S}_{B}^{\varepsilon \omega} [\phi] - \mathcal{S}^0_{B}[\phi] - \beta_{\varepsilon \omega} \int_{\partial B} \phi \right\|_{H^1(\partial B)} &\leq C (\varepsilon \omega)^2 \ln (\varepsilon \omega) \| \phi \|_{L^2(\partial B)},
	\\
	\left\|
	\left. \frac{\partial \mathcal{S}_B^{\varepsilon \omega}[{\phi}]}{\partial \nu}\right|_{\pm} - 	\left. \frac{\partial \mathcal{S}_B^{0}[{\phi}]}{\partial \nu}\right|_{\pm}\right\|_{L^2(\partial B)} &\leq C (\varepsilon \omega)^2 \ln (\varepsilon \omega) \| \phi \|_{L^2(\partial B)}, 
		\label{eq:estimates-SL-2D} 
	\end{align*}
	where $\beta_{\varepsilon \omega} = \frac{1}{2\pi} (\ln(\varepsilon \omega) - \ln 2 + \gamma -\frac{\pi}{2} i)$, and $\gamma$ is the Euler constant, 
	
\item[b)] if $d = 3$	\begin{align*}
	\|\mathcal{S}_B^{\varepsilon \omega} [{\phi}] - \mathcal{S}_B^{0}[{\phi}]\|_{H^1(\partial B)}& \leq C \varepsilon \omega \|{\phi}\|_{L^2(\partial B)}, \\
	\left\|
	\left. \frac{\partial \mathcal{S}_B^{\varepsilon \omega}[{\phi}]}{\partial \nu}\right|_{\pm} - 	\left. \frac{\partial \mathcal{S}_B^{0}[{\phi}]}{\partial \nu}\right|_{\pm}\right\|_{L^2(\partial B)} & \leq C \varepsilon \omega \|{\phi}\|_{L^2(\partial B)}.
	\end{align*}
\end{enumerate}
\end{lem}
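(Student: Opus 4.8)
The plan is to derive both families of estimates from the small-argument asymptotics of the fundamental solution $\Gamma_\omega$ and then transfer those pointwise kernel bounds to operator bounds by Cauchy--Schwarz. The starting point is the expansion of $\Gamma_{\varepsilon\omega}$ about $\varepsilon\omega=0$. In three dimensions this is elementary: expanding the exponential gives $\Gamma_{\varepsilon\omega}(x)-\Gamma_0(x)=\tfrac{i\varepsilon\omega}{4\pi}+O\bigl((\varepsilon\omega)^2|x|\bigr)$, so that the difference kernel $\Gamma_{\varepsilon\omega}(x-s)-\Gamma_0(x-s)$ is in fact \emph{continuous across the diagonal} (the $1/|x-s|$ singularity cancels) and, together with its gradient in $x$, is bounded by $C\varepsilon\omega$ uniformly for $x,s\in\partial B$ and $\varepsilon\omega<\varepsilon_0$. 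In two dimensions I would instead use the classical series for the Hankel function $H_0^{(1)}$, which isolates the logarithmic singularity: writing $\ln(\varepsilon\omega|x-s|)=\ln(\varepsilon\omega)+\ln|x-s|$, the $\ln|x-s|$ part reassembles into $\Gamma_0(x-s)$, the frequency-dependent constant part collects into $\beta_{\varepsilon\omega}$, and everything else is a remainder $R_{\varepsilon\omega}(x-s)$ whose leading term is of order $(\varepsilon\omega)^2|x-s|^2\ln(\varepsilon\omega|x-s|)$. On the bounded set $\{|x-s|\le\operatorname{diam}(B)\}$ this yields $|R_{\varepsilon\omega}(x-s)|\le C(\varepsilon\omega)^2|\ln(\varepsilon\omega)|$, and since the gradient of $|x-s|^2\ln|x-s|$ remains bounded, the same order bound holds for $\nabla_x R_{\varepsilon\omega}$.

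Granting these kernel estimates, the operator bounds follow by a routine argument. For the single-layer estimate I would write
\[
\mathcal{S}_B^{\varepsilon\omega}[\phi](x)-\mathcal{S}_B^0[\phi](x)-\beta_{\varepsilon\omega}\int_{\partial B}\phi
=\int_{\partial B}R_{\varepsilon\omega}(x-s)\,\phi(s)\,d\sigma(s),
\]
with $R_{\varepsilon\omega}$ replaced by $\Gamma_{\varepsilon\omega}-\Gamma_0$ and no $\beta$-term when $d=3$. Cauchy--Schwarz in $s$ immediately bounds the $L^2(\partial B)$ norm by the stated order times $\|\phi\|_{L^2(\partial B)}$ (the kernel being bounded and $\partial B$ of finite measure); differentiating under the integral sign in the tangential variable and using the bound on $\nabla_x R_{\varepsilon\omega}$ controls the remaining $H^1(\partial B)$ contribution in the same way. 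For the normal-derivative estimates I would invoke the jump relations of the preceding lemma: the singular terms $\pm\tfrac12\mathcal{I}$ are identical at $\varepsilon\omega$ and at $0$ and cancel in the difference, leaving $\bigl((\mathcal{K}_B^{\varepsilon\omega})^*-(\mathcal{K}_B^0)^*\bigr)[\phi]$, whose kernel is $\partial_{\nu_x}R_{\varepsilon\omega}(x-s)$ (the constant $\beta_{\varepsilon\omega}$ drops out under differentiation). The same Cauchy--Schwarz estimate then gives the $L^2(\partial B)$ bound.

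The delicate point is the two-dimensional case, where one must track the logarithm with care. The subtraction of both $\mathcal{S}_B^0$ and $\beta_{\varepsilon\omega}$ is precisely what removes the two obstructions to smallness: the $\ln|x-s|$ singularity and the $\ln(\varepsilon\omega)$ factor, which by itself does not tend to zero. Verifying that after these subtractions the genuinely leading remaining contribution is $O\bigl((\varepsilon\omega)^2\ln(\varepsilon\omega)\bigr)$ --- rather than a stray $O(\varepsilon\omega)$ or $O((\varepsilon\omega)^2)$ term lacking the logarithm --- requires writing out the first two terms of the Hankel expansion explicitly and checking the cancellations. One also has to confirm that the residual kernel $|x-s|^2\ln|x-s|$ is regular enough for the tangential derivative to pass under the integral; its gradient behaves like $|x-s|\ln|x-s|$, which is bounded and integrable across the diagonal, so no further singular analysis is needed. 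Finally, the threshold $\varepsilon_0$ enters exactly to keep $\varepsilon\omega\,\operatorname{diam}(B)$ within the radius of validity of the series remainder, making all the constants $C$ uniform in the stated range.
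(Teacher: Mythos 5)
The paper itself offers no proof of this lemma: it is recalled verbatim from the cited references (Ammari--Kang's boundary-layer paper and the Hariharan--MacCamy reports), so there is no internal argument to compare yours against. Judged on its own, your proposal is correct and is, in substance, the standard proof from that literature: expand the kernel $\Gamma_{\varepsilon\omega}-\Gamma_0$ (minus the constant $\beta_{\varepsilon\omega}$ in dimension two) for small $\varepsilon\omega$, obtain uniform bounds of order $\varepsilon\omega$ (3D) respectively $(\varepsilon\omega)^2|\ln(\varepsilon\omega)|$ (2D) on the residual kernel and on its $x$-gradient, and convert these pointwise bounds into operator bounds by Cauchy--Schwarz, using differentiation under the integral sign for the tangential part of the $H^1(\partial B)$ norm and the jump relations (with the $\pm\tfrac12\mathcal{I}$ terms cancelling) for the normal-derivative part. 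The points you single out as delicate are indeed the right ones: the residual $z^2\ln z$ behaviour in 2D, the boundedness of $\nabla\bigl(|x-s|^2\ln|x-s|\bigr)$, and the role of $\varepsilon_0$ in keeping the series remainders uniform.

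One bookkeeping item to watch when you ``write out the first two terms of the Hankel expansion and check the cancellations'': with the conventions as printed in the paper, $\Gamma_\omega=+\tfrac{i}{4}H_0^{(1)}(\omega|\cdot|)$ and $\Gamma_0=\tfrac{1}{2\pi}\ln|\cdot|$, the small-argument expansion $H_0^{(1)}(z)=1+\tfrac{2i}{\pi}\bigl(\ln(z/2)+\gamma\bigr)+O(z^2\ln z)$ yields $\Gamma_{\varepsilon\omega}(x)=-\Gamma_0(x)-\beta_{\varepsilon\omega}+O\bigl((\varepsilon\omega)^2|\ln(\varepsilon\omega)|\bigr)$, i.e.\ both signs come out flipped. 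The lemma as stated is the one valid for the convention $\Gamma_\omega=-\tfrac{i}{4}H_0^{(1)}(\omega|\cdot|)$ used in the cited references; the paper's definition carries a sign slip. Under the consistent convention your identification of the constant with $\beta_{\varepsilon\omega}$, including the $-\tfrac{\pi}{2}i$ term, is exactly right, so this does not affect the structure or validity of your argument, only the sign bookkeeping of the constant term.
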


Note that when $d = 2$ the single-layer potential is not, in general, invertible nor injective.


\section{Problem formulation}

\label{sec:problem-formulation}
Let $B$ be a bounded Lipschitz domain in $\mathbb{R}^d$ ($d = 2, 3$) such that $B$ contains the origin and $|B|=1$, and let $D = z+\varepsilon B$ be a small acoustic inclusion of contrast $k > 0$, $k \ne 1$, with $0<\varepsilon<1$. Let $\Gamma_{\omega}$ be as in \eqref{eq:fund_sol}. Denote by $V_y$ the field corresponding to a time-harmonic wave generated at $y \in \mathbb{R}^d \setminus \overline{D}$,
$$V_y(x,\omega) := \Gamma_{\omega} (x-y) ,$$
where $\omega > 0$ is the operating frequency, and $x \neq y$.  Moreover, we assume that $\operatorname{dist}(y,D) \gg 1$, that is, the inclusion $D$ is far from the source. 

Let $v_y$ be the field perturbed by the presence of the inclusion $D$, which is the solution to
\begin{equation} 
\nabla \cdot ( \chi(\mathbb{R}^d \setminus \overline{D} ) + k \chi(D)) \nabla v_y + \omega^2 v_y = - \delta_y,
\label{AE:49}
\end{equation}
with $\chi$ denoting the characteristic function, and $v_y-V_y$ satisfying the so-called Sommerfeld radiation condition, i.e., 
\begin{equation} 
\lim_{r \to +\infty} r^{(d-1)/2} \left ( \frac{\partial}{\partial r} (v_y-V_y) - i \omega (v_y-V_y) \right ) = 0, \quad |x| = r.
\label{eq:Sommerfeld}
\end{equation}

 Equation \eqref{AE:49} can be written equivalently as the following transmission problem
\begin{equation*}
\begin{cases}
\displaystyle \Delta v_y + \omega^2 v_y = - \delta_y, & \mathbb{R}^d \setminus \overline{D}, \\
\displaystyle \Delta v_y + \frac{\omega^2}{k} v_y = 0, & D, \\
v_y|_{+} = v_y|_{-}, & \partial D, \\
\displaystyle \frac{\partial v_y}{\partial \nu}\biggr |_{+} = k \frac{\partial v_y}{\partial \nu} \biggr |_{-}, & \partial D, \\
v_y-V_y \text{ satisfies condition } \eqref{eq:Sommerfeld} .\end{cases} 
\label{AE:50}
\end{equation*}
Notice that the solution $v_y$ can be represented as follows \cite{boundary layer}:
\begin{equation}
v_y (x,\omega)= 
\begin{cases}
V_y(x,\omega) + \mathcal{S}_D^{\omega} [\psi](x), & x\in \mathbb{R}^d \setminus \overline{D}, \\
\mathcal{S}_D^{\frac{\omega}{\sqrt{k}}}[\phi](x), & x\in D,
\end{cases} 
\label{AE:51} 
\end{equation}   
where the pair $(\phi, \psi) \in L^2(\partial D) \times L^2(\partial D)$ is the unique solution of the following system of boundary integral equations on $\partial D$:
\begin{equation}
\begin{cases}
\mathcal{S}_D^{\frac{\omega}{\sqrt{k}}} [\phi] - \mathcal{S}_D^{\omega}[\psi] = V_y,\\
\displaystyle k \frac{\partial \mathcal{S}_D^{\frac{\omega}{\sqrt{k}}} [\phi]}{\partial \nu} \biggr |_{-} - \frac{\partial \mathcal{S}_D^{\omega}[\psi]}{\partial \nu} \biggr |_{+} = \frac{\partial V_y}{\partial \nu},
\end{cases}
\text{ on } \partial D.
\label{AE:52}
\end{equation}

\begin{rem} The system \eqref{AE:52} has a unique solution provided that $\omega^2$ is not a Dirichlet eigenvalue for $-\Delta$ on $D$.	
	This is certainly true when $\lambda_1(D) \geq \left(1/|D|\right)^{2/d} C_d^{2/d} j_{d/2-1,1}$, where $j_{m,1}$ is the first positive zero of the Bessel function $J_m$, $C_d$ is the volume of the $d$-dimensional unit ball, and $\lambda_1(D)>0$ is the smallest eigenvalue of $-\Delta$ on $D$, see \cite{Iso, Krahn}.	
\end{rem}
Hereinafter, we limit our considerations to the two-dimensional problem only. For the three-dimensional case see Appendix \ref{appendix1}.

\vspace{1mm}

\section{Stability estimates for the transmission problem} 

\label{sec:stability-estimates-2d}

The technical estimates contained in this section will be used in the derivation of the asymptotic expansion to make the dependence of the reminder on the operating frequency explicit.

Let $D = \varepsilon B +z$, $|B|=1$ and $D \subset \mathbb{R}^2$. We suppose that $\omega \in (0, \varepsilon^{-\gamma})$, with $0<\gamma <1$. Note that there exists $\varepsilon_0 >0$ such that, for $\varepsilon$ sufficiently small, $\varepsilon \omega \leq \varepsilon_0<1$, i.e., $\varepsilon \omega$ can be made arbitrarily small. 
The main estimate is given by the following proposition.
\begin{prop}\label{t1}
	For each $(F,G)$ $\in H^1(\partial D) \times L^2(\partial D)$, let $(\phi, \psi) \in L^2(\partial D) \times L^2(\partial D)$ be the unique solution of the system of integral equations
	\begin{equation}
	\begin{cases}
	\mathcal{S}_D^{\frac{\omega}{\sqrt{k}}} [\phi] - \mathcal{S}_D^{\omega}[\psi] = F,\\
	\displaystyle k \left. \frac{\partial \mathcal{S}_D^{\frac{\omega}{\sqrt{k}}} [\phi]}{\partial \nu}\right|_{-} - \left. \frac{\partial \mathcal{S}_D^{\omega}[\psi]}{\partial \nu} \right|_{+} = G,
	\end{cases}
	\text{ on } \partial D.
	\label{1}
	\end{equation}
	Let $\varepsilon_0>0$ be such that $\varepsilon \omega \leq \varepsilon_0<1$. We have
	\begin{equation}
	\begin{aligned} 
	\|\phi \|_{L^2(\partial D)} + \|\psi \|_{L^2(\partial D)} \leq C (\varepsilon^{-1} \| F \|_{L^2(\partial D)} + \| \nabla F \|_{L^2(\partial D)} + \| G  \|_{L^2(\partial D)} ),
	\end{aligned} 
	\label{2}
	\end{equation} 
	where $C$ does not depend on $\varepsilon$ and $\omega$. 
\end{prop}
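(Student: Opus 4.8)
The plan is to transfer the whole system to the fixed reference domain $B$, where the effective wavenumber becomes the small quantity $\varepsilon\omega\le\varepsilon_0$, and to prove an $\varepsilon\omega$-uniform estimate there. Writing $x=z+\varepsilon\tilde x$ with $\tilde x\in\partial B$ and $\tilde\phi(\tilde x)=\phi(z+\varepsilon\tilde x)$ (and likewise for $\psi,F,G$), the identity $\Gamma_\omega(\varepsilon\,\cdot)=\Gamma_{\varepsilon\omega}(\cdot)$ in dimension two gives $\mathcal S_D^\omega[\phi](z+\varepsilon\tilde x)=\varepsilon\,\mathcal S_B^{\varepsilon\omega}[\tilde\phi](\tilde x)$, while the Neumann--Poincar\'e operators are scale invariant, so that $\partial_\nu\mathcal S_D^\omega[\phi]|_\pm=\big(\pm\tfrac12\mathcal I+(\mathcal K_B^{\varepsilon\omega})^*\big)[\tilde\phi]$. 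Hence \eqref{1} becomes the same system on $\partial B$ with wavenumbers $\varepsilon\omega$ and $\varepsilon\omega/\sqrt k$ and right-hand sides $\tilde f:=\varepsilon^{-1}\tilde F$ and $\tilde g:=\tilde G$. Recording the elementary scalings $\|\tilde\phi\|_{L^2(\partial B)}=\varepsilon^{-1/2}\|\phi\|_{L^2(\partial D)}$ and $\|\tilde f\|_{H^1(\partial B)}\le C\big(\varepsilon^{-3/2}\|F\|_{L^2(\partial D)}+\varepsilon^{-1/2}\|\nabla F\|_{L^2(\partial D)}\big)$, $\|\tilde g\|_{L^2(\partial B)}=\varepsilon^{-1/2}\|G\|_{L^2(\partial D)}$, one checks that \eqref{2} is \emph{exactly} equivalent to the $\varepsilon\omega$-uniform bound $\|\tilde\phi\|_{L^2(\partial B)}+\|\tilde\psi\|_{L^2(\partial B)}\le C\big(\|\tilde f\|_{H^1(\partial B)}+\|\tilde g\|_{L^2(\partial B)}\big)$, so everything reduces to proving the latter.

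To do so I would freeze the wavenumber by Lemma~\ref{t0}(a), replacing $\mathcal S_B^{\varepsilon\omega}$ by $\mathcal S_B^0+\beta_{\varepsilon\omega}\int_{\partial B}\cdot$ and $(\mathcal K_B^{\varepsilon\omega})^*$ by $(\mathcal K_B^0)^*$ up to remainders $R_1,R_2$ bounded by $C(\varepsilon\omega)^2|\ln(\varepsilon\omega)|$ times $\|\tilde\phi\|_{L^2}+\|\tilde\psi\|_{L^2}$. Using the decisive fact that $\beta_{\varepsilon\omega/\sqrt k}-\beta_{\varepsilon\omega}=-\tfrac{\ln k}{4\pi}$ is a fixed constant, the reduced system becomes the static transmission system
\begin{align*}
\mathcal S_B^0[\tilde\phi-\tilde\psi]+\beta_{\varepsilon\omega}\int_{\partial B}(\tilde\phi-\tilde\psi)-\tfrac{\ln k}{4\pi}\int_{\partial B}\tilde\phi &= \tilde f+R_1,\\
k\big(-\tfrac12\mathcal I+(\mathcal K_B^0)^*\big)[\tilde\phi]-\big(\tfrac12\mathcal I+(\mathcal K_B^0)^*\big)[\tilde\psi] &= \tilde g+R_2,
\end{align*}
in which the only $\varepsilon\omega$-dependent (and logarithmically unbounded) coefficient is $\beta_{\varepsilon\omega}$, multiplying solely the total charge $\int_{\partial B}(\tilde\phi-\tilde\psi)$.

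The heart of the matter, and the main obstacle, is to invert this limiting system from $L^2\times L^2$ into $H^1\times L^2$ with a bound independent of $\varepsilon\omega$, in spite of the logarithmic growth of $\beta_{\varepsilon\omega}$ and of the fact that in two dimensions $\mathcal S_B^0$ is in general neither injective nor surjective. First, integrating the second equation over $\partial B$ and using $\mathcal K_B^0[\mathbf 1]=\tfrac12$ (so $\int_{\partial B}(\mathcal K_B^0)^*[\eta]=\tfrac12\int_{\partial B}\eta$) makes the $\tilde\phi$-terms cancel and yields $\int_{\partial B}\tilde\psi=-\int_{\partial B}(\tilde g+R_2)$, controlling one mean by the data. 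Setting $\eta:=\tilde\phi-\tilde\psi$ and splitting $\eta=\eta_0+\big(\int_{\partial B}\eta\big)\varphi_0$ into its mean-zero part and a multiple of the equilibrium density $\varphi_0$ ($\int_{\partial B}\varphi_0=1$, $\mathcal S_B^0[\varphi_0]=c_0\mathbf 1$), I would recover $\eta_0$ from the invertibility of $\mathcal S_B^0$ on the mean-zero subspace and determine the scalar $\int_{\partial B}\eta$ from the constant-function component of the first equation, whose coefficient works out to $c_0+\beta_{\varepsilon\omega/\sqrt k}$. This coefficient is logarithmically large, yet it is harmless: its imaginary part equals the fixed nonzero number $-\tfrac14$ (inherited from the $-\tfrac{\pi}{2}i$ in $\beta$), so $|c_0+\beta_{\varepsilon\omega/\sqrt k}|\ge\tfrac14$ uniformly, and since the modulus in fact tends to $\infty$ the charge $\int_{\partial B}\eta$ is not merely bounded but $O(1/|\beta_{\varepsilon\omega}|)$ --- the two-dimensional logarithmic singularity works in our favour rather than against us. Thus $\eta$, and then $\int_{\partial B}\tilde\phi$, are controlled uniformly in $\varepsilon\omega$.

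With $\eta$ determined, substituting $\tilde\phi=\tilde\psi+\eta$ into the second equation reduces it to $\big((k-1)(\mathcal K_B^0)^*-\tfrac{k+1}{2}\mathcal I\big)[\tilde\psi]=\tilde g+R_2-k\big(-\tfrac12\mathcal I+(\mathcal K_B^0)^*\big)[\eta]$; since $\tfrac{k+1}{2(k-1)}\notin[-\tfrac12,\tfrac12]$ for every $k>0$, $k\ne1$, the operator on the left is invertible on $L^2(\partial B)$ by the standard Neumann--Poincar\'e spectral bound, so $\tilde\psi$ and $\tilde\phi=\tilde\psi+\eta$ are bounded by $\|\tilde f\|_{H^1(\partial B)}+\|\tilde g\|_{L^2(\partial B)}$ with a constant independent of $\varepsilon\omega$. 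Finally, because $\|R_1\|_{H^1}+\|R_2\|_{L^2}\le C(\varepsilon\omega)^2|\ln(\varepsilon\omega)|\,(\|\tilde\phi\|_{L^2}+\|\tilde\psi\|_{L^2})$, a Neumann-series absorption argument (shrinking $\varepsilon_0$ if necessary) promotes the estimate from the limiting system to the full reduced system on $\partial B$; reinserting the scalings of the first step then gives \eqref{2} with $C$ independent of $\varepsilon$ and $\omega$.
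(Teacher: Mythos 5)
Your proof is correct, and its outer architecture is exactly that of the paper: rescale to $\partial B$ (with the same norm bookkeeping), split the scaled operator into a frozen part retaining the $\beta_{\varepsilon\omega}$-terms plus a remainder of size $O((\varepsilon\omega)^2|\ln(\varepsilon\omega)|)$ by Lemma \ref{t0}, invert the frozen part uniformly, absorb the remainder by a Neumann series, and scale back. The genuine difference lies in how the uniform invertibility of the frozen system is obtained. The paper isolates this as Lemma \ref{t2} and proves it by citing Hariharan--MacCamy for the invertibility of $\widehat{\mathcal{S}}_B^{\varepsilon\omega}=\mathcal{S}_B^0+\beta_{\varepsilon\omega}\int_{\partial B}$, and then writes solution formulas for $(\tilde\phi,\tilde\psi)$ in which $\int_{\partial B}\tilde\psi$ still appears implicitly on the right-hand side. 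You instead argue in a self-contained way and in a cleaner order: integrating the flux equation (using $\int_{\partial B}(\mathcal{K}_B^0)^*[\cdot]=\frac12\int_{\partial B}\cdot$) decouples $\int_{\partial B}\tilde\psi$ at the outset; the first equation then collapses, via the same key identity $\beta_{\varepsilon\omega/\sqrt k}-\beta_{\varepsilon\omega}=-\frac{\ln k}{4\pi}$ that drives the paper's Lemma \ref{t2}, to $\widehat{\mathcal{S}}_B^{\varepsilon\omega/\sqrt{k}}[\eta]=h$ for $\eta=\tilde\phi-\tilde\psi$; and you re-prove the needed uniform invertibility by the equilibrium-density splitting, the decisive observation being that $\mathrm{Im}\,\bigl(c_0+\beta_{\varepsilon\omega/\sqrt k}\bigr)=-\tfrac14$ (since $c_0$ is real), so the scalar coefficient of the total charge never degenerates even though $|\beta_{\varepsilon\omega}|\to\infty$. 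Finally $\tilde\psi$ is recovered through $\bigl(\frac{k+1}{2(k-1)}\mathcal{I}-\mathcal{K}_B^*\bigr)^{-1}$, the same Neumann--Poincar\'e inversion that appears in the paper's formulas. What your version buys is that it makes explicit what the paper leaves inside the citation, namely why the two-dimensional logarithmic singularity works in one's favour rather than against; what the paper's version buys is brevity, since the uniform bounds on $(\widehat{\mathcal{S}}_B^{\varepsilon\omega})^{-1}$ come pre-packaged from the cited reference. One ingredient you use tacitly and should state: that $(\eta_0,a)\mapsto\mathcal{S}_B^0[\eta_0]+a$ is an isomorphism from $L_0^2(\partial B)\times\mathbb{C}$ onto $H^1(\partial B)$ --- this standard two-dimensional single-layer fact is precisely the solvability, with uniform bounds, of the $(\phi_1,c_1)$ system in the paper's proof of Lemma \ref{t2}, and it is what licenses both the recovery of $\eta_0$ and your matching of the constant-function components.
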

Proposition \ref{t1} states that the solution to system \eqref{1} depends continuously on the right-hand side of the system, i.e., $(F, G)$.

Since the two-dimensional fundamental solutions $\Gamma_{\varepsilon\omega}(x-y)$ and $\Gamma_{\varepsilon \omega /\sqrt{k}}(x-y)$ do not converge to $\Gamma_{0}(x-y) = \frac{1}{2\pi} \log |x-y|$ as $\varepsilon$ goes to zero, the proof of Proposition \ref{t1} is not immediate. The proof we present here relies on the following lemma \cite{boundary layer, Low freq, Interface}.

\begin{lem}\label{t2} 

\bigskip 
For each $(f,g)$ $\in H^1(\partial B) \times L^2(\partial B)$, let $(\phi, \psi) \in L^2(\partial B) \times L^2(\partial B)$ be the unique solution of the system
\begin{equation}
\begin{cases}
\displaystyle \mathcal{S}_B^0 [\phi] + \beta_{\varepsilon \omega / \sqrt{k}} \int_{\partial B} \phi - \mathcal{S}_B^0[\psi] - \beta_{\varepsilon \omega} \int_{\partial B} \psi = f,\\
\displaystyle k \left.\frac{\partial \mathcal{S}_B^{0} [\phi]}{\partial \nu}\right|_{-} - \left.\frac{\partial \mathcal{S}_B^{0}[\psi]}{\partial \nu}\right|_{+} = g,
\end{cases}
\label{sys0}
\text{ on } \partial B.
\end{equation}
Suppose there exists $\varepsilon_0>0$ such that $\varepsilon \omega \leq \varepsilon_0<1$. We have
\begin{equation}
\|\phi\|_{L^2(\partial B)} + \|\psi\|_{L^2(\partial B)} \leq C(
\|f\|_{H^1(\partial B)} + \|g\|_{L^2(\partial B)}), 
\label{est-lemma}
\end{equation}
where $C$ does not depend on $\varepsilon$ nor $\omega$.  
\end{lem}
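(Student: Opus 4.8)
The plan is to exploit the one algebraic feature that tames the system despite the divergence of its coefficients. Although both $\beta_{\varepsilon\omega}$ and $\beta_{\varepsilon\omega/\sqrt{k}}$ blow up logarithmically as $\varepsilon\omega\to 0$, their difference is the fixed constant
\[
\beta_{\varepsilon\omega/\sqrt{k}}-\beta_{\varepsilon\omega}=\frac{1}{2\pi}\log\frac{1}{\sqrt{k}}=:c_k ,
\]
independent of $\varepsilon$ and $\omega$, and $\beta_{\varepsilon\omega}$ carries the nonzero imaginary part $-\tfrac14 i$, so that $c_B+\beta_{\varepsilon\omega}+c_k$ stays bounded away from $0$ uniformly in $\varepsilon\omega\in(0,\varepsilon_0]$. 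Since the divergent coefficients in \eqref{sys0} act only through the two scalars $a:=\int_{\partial B}\phi$ and $b:=\int_{\partial B}\psi$, the first step is to pin these down and show that the apparently singular constant $\kappa:=\beta_{\varepsilon\omega/\sqrt{k}}\,a-\beta_{\varepsilon\omega}\,b$ appearing in the first equation is in fact controlled by the data.

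I would first integrate the second equation of \eqref{sys0} over $\partial B$. As $\mathcal{S}_B^0[\phi]$ is harmonic in $B$ and $\mathcal{S}_B^0[\psi]\sim\frac{b}{2\pi}\log|x|$ at infinity, the divergence theorem gives $\int_{\partial B}\partial_\nu\mathcal{S}_B^0[\phi]|_-=0$ and $\int_{\partial B}\partial_\nu\mathcal{S}_B^0[\psi]|_+=b$, whence
\[
b=-\int_{\partial B} g,\qquad |b|\le C\,\|g\|_{L^2(\partial B)} .
\]
Next I would pair the first equation with the equilibrium density $\phi_0$, defined by $(-\tfrac12\mathcal{I}+(\mathcal{K}_B^0)^*)[\phi_0]=0$ and $\int_{\partial B}\phi_0=1$, so that $\mathcal{S}_B^0[\phi_0]\equiv c_B$ on $\partial B$. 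Using the self-adjointness of $\mathcal{S}_B^0$ and $\int_{\partial B}\phi_0=1$, and writing $\beta_{\varepsilon\omega/\sqrt{k}}=\beta_{\varepsilon\omega}+c_k$, this pairing collapses to the single scalar identity
\[
\bigl(c_B+\beta_{\varepsilon\omega}+c_k\bigr)\,(a-b)=\int_{\partial B} f\,\phi_0-c_k\,b .
\]

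Because $|c_B+\beta_{\varepsilon\omega}+c_k|\ge\tfrac14$ while $\beta_{\varepsilon\omega/\sqrt{k}}/(c_B+\beta_{\varepsilon\omega}+c_k)\to1$ as $\varepsilon\omega\to 0$, both ratios $\beta_{\varepsilon\omega}/(c_B+\beta_{\varepsilon\omega}+c_k)$ and $\beta_{\varepsilon\omega/\sqrt{k}}/(c_B+\beta_{\varepsilon\omega}+c_k)$ are bounded uniformly in $\varepsilon\omega\in(0,\varepsilon_0]$. From the displayed identity I then read off $|a-b|\le C(\|f\|_{L^2}+\|g\|_{L^2})$ together with the crucial bound $\kappa=\beta_{\varepsilon\omega/\sqrt{k}}(a-b)+c_k b$, giving $|a|+|b|+|\kappa|\le C(\|f\|_{H^1(\partial B)}+\|g\|_{L^2(\partial B)})$ with $C$ independent of $\varepsilon,\omega$. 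Subtracting the constant $\kappa$ from the first equation, i.e. setting $\tilde f:=f-\kappa$ (so $\|\tilde f\|_{H^1}\le C(\|f\|_{H^1}+\|g\|_{L^2})$), reduces \eqref{sys0} to the $\varepsilon\omega$\emph{-independent} static transmission system
\[
\mathcal{S}_B^0[\phi]-\mathcal{S}_B^0[\psi]=\tilde f,\qquad
k\,\partial_\nu\mathcal{S}_B^0[\phi]\big|_--\partial_\nu\mathcal{S}_B^0[\psi]\big|_+=g ,
\]
supplemented by the now-known value of $\int_{\partial B}\phi=a$.

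It remains to invert this fixed system with a uniform constant, which I expect to be the \textbf{main obstacle}. I would establish injectivity of the reduced problem augmented by the mean constraint: an energy identity for the associated conductivity transmission problem (multiplying the flux jump by the common trace and integrating over $B$ and $\mathbb{R}^2\setminus\overline{B}$, where the constraint $\int_{\partial B}\psi=0$ ensures the decay needed to discard the term at infinity) forces $\psi=0$ and $\mathcal{S}_B^0[\phi]\equiv 0$ on $\partial B$ with $(-\tfrac12\mathcal{I}+(\mathcal{K}_B^0)^*)[\phi]=0$; since $\ker(-\tfrac12\mathcal{I}+(\mathcal{K}_B^0)^*)=\operatorname{span}\{\phi_0\}$ and $\int_{\partial B}\phi_0\neq0$, the constraint $\int_{\partial B}\phi=0$ then yields $\phi=0$. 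This injectivity is where the logarithmic non-invertibility of $\mathcal{S}_B^0$ in two dimensions (the case $c_B=0$) must be defeated, and it is defeated precisely by the mean constraint extracted above. Injectivity together with the Fredholm theory for $\mathcal{S}_B^0\colon L^2(\partial B)\to H^1(\partial B)$ and $\pm\tfrac12\mathcal{I}+(\mathcal{K}_B^0)^*$ on the Lipschitz domain $B$ gives a bounded inverse of the augmented fixed operator, hence $\|\phi\|_{L^2}+\|\psi\|_{L^2}\le C(\|\tilde f\|_{H^1}+\|g\|_{L^2}+|a|)$ with $C$ depending only on $B$ and $k$. Combining this with the bounds on $a$ and $\kappa$ yields \eqref{est-lemma}.
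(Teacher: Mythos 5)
Your proof is correct, and it takes a genuinely different route from the paper's. The paper defines the modified single-layer operator $\widehat{\mathcal{S}}^{\varepsilon\omega}_B[\phi] := \mathcal{S}^0_B[\phi]+\beta_{\varepsilon\omega}\int_{\partial B}\phi$ and imports the Hariharan--MacCamy theorem that it is invertible for $\varepsilon\omega$ small, with the uniform bound $\|(\widehat{\mathcal{S}}^{\varepsilon\omega}_B)^{-1}[h]\|_{L^2(\partial B)}\le K\|h\|_{H^1(\partial B)}$; it then solves \eqref{sys0} explicitly, writing $\phi$ in terms of $\psi$, $f$ and $(\widehat{\mathcal{S}}^{\varepsilon\omega}_B)^{-1}$, and $\psi$ in terms of the data via the invertible operator $\bigl(\frac{k+1}{2(k-1)}\mathcal{I}-\mathcal{K}^*_B\bigr)^{-1}$, and reads the estimate off these formulas. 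You never invoke Hariharan--MacCamy: instead you pin down the two scalars $a=\int_{\partial B}\phi$ and $b=\int_{\partial B}\psi$ directly (flux identity for $b$; pairing with the equilibrium density $\phi_0$ for $a-b$), observe that the divergent coefficients enter only through $\kappa=\beta_{\varepsilon\omega/\sqrt k}\,a-\beta_{\varepsilon\omega}\,b$, which is bounded by the data because your identity makes $a-b$ of size $O(1/|\beta_{\varepsilon\omega}|)$, and then reduce to a fixed, $(\varepsilon,\omega)$-independent static system whose augmented operator you invert by an energy/Fredholm argument. The uniformity mechanism is the same in both proofs --- the imaginary part $-\tfrac{i}{4}$ of $\beta_{\varepsilon\omega}$ keeps the relevant scalar denominator away from zero (in the paper this is hidden inside the cited invertibility of $\widehat{\mathcal{S}}^{\varepsilon\omega}_B$, whose representation formula carries the denominator $\beta_{\varepsilon\omega}-c_0$ with $c_0$ real) --- and both ultimately rest on the invertibility of $\frac{k+1}{2(k-1)}\mathcal{I}-\mathcal{K}^*_B$ on $L^2(\partial B)$ for $k>0$, $k\neq 1$. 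What the paper's route buys is brevity and explicit solution formulas; what yours buys is self-containedness, since the only external inputs are the classical Fredholm facts about $\mathcal{S}^0_B$ and $\lambda\mathcal{I}-\mathcal{K}^*_B$ on Lipschitz domains, at the price of the longer closing step (injectivity plus closed range of the augmented operator), which you correctly flag as the main burden and sketch soundly.
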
 

\begin{proof} 
	We define 
	$$
	\widehat{\mathcal{S}}^{\varepsilon \omega}_B[\phi] := \mathcal{S}^0_B[\phi] + \beta_{\varepsilon \omega} \int_{\partial B} \phi.
	$$ 
	Hariharan and MacCamy proved that $\widehat{\mathcal{S}}^{\varepsilon \omega}_B$ is invertible for $\varepsilon \omega$ small enough \cite{Low freq}. In particular, a $\phi$ solution to
	$$
	\widehat{\mathcal{S}}^{\varepsilon \omega}_D[\phi] = h, \; \text{ for } h \in H^1(\partial B),
	$$
	can be represented as
	$$
	\phi = \phi_0 + \frac{c_1 - \beta_{\varepsilon \omega}}{\beta_{\varepsilon \omega} - c_0} \phi_1,
	$$
	where $(\phi_0,c_0) \in H^1(\partial B) \times \mathbb{R}$ solves 
	$$
	\begin{cases}
	\mathcal{S}_B^0[\phi_0] + c_0 = 0, \\
	\int_{\partial B} \phi_0 = 1,
	\end{cases} 
	$$
	and $(\phi_1,c_1) \in L^2(\partial B) \times \mathbb{C}$ solves
	$$
	\begin{cases}
	\mathcal{S}_B^0[\phi_1] + c_1 = h, \\
	\int_{\partial B} \phi_1 = 1.
	\end{cases} 
	$$
	Moreover, there exists a constant $K$, independent of $\varepsilon$, $\omega$ and $h$, such that
	$$
	\|\phi_0\|_{L^2(\partial B)} + |c_0| \leq K,
	$$
	$$
	\|\phi_1\|_{L^2(\partial B)} + |c_1| \leq K \|h\|_{H^1(\partial B)},
	$$
	that is 
	$$
	\|\phi\|_{L^2(\partial B)} = \|(\widehat{\mathcal{S}}^{\varepsilon \omega}_B)^{-1}[h] \|_{L^2(\partial B)} \leq K \|h\|_{H^1(\partial B)}.
	$$ 
	By solving system \eqref{sys0} for $(\phi, \psi)$, we get
	$$
	\phi= \psi + \frac{1}{2 \pi} \ln(\sqrt{k}) \left(\int_{\partial B} \psi \right) (\widehat{\mathcal{S}}^{\varepsilon \omega}_B)^{-1}[\chi_{\partial B}] + (\widehat{\mathcal{S}}^{\varepsilon \omega}_B)^{-1}[f]
	$$
	and
	$$
	\begin{aligned}
	\psi = \; & \frac{k}{2 \pi (1-k)}\ln(\sqrt{k}) \left(\int_{\partial B} \psi\right) \left(\frac{(k+1)}{2(k-1)}\mathcal{I} - \mathcal{K}^*_B\right)^{-1}\left(-\frac{1}{2}\mathcal{I} +\mathcal{K}^*_B\right) (\widehat{\mathcal{S}}^{\varepsilon \omega}_B)^{-1}[\chi_{\partial B}] \\ & - \frac{k}{1-k}\left(\frac{(k+1)}{2(k-1)}\mathcal{I} - \mathcal{K}^*_B\right)^{-1}\left(-\frac{1}{2}\mathcal{I} +\mathcal{K}^*_B\right)(\widehat{\mathcal{S}}^{\varepsilon \omega}_B)^{-1}[f] + \left(\frac{(k+1)}{2(k-1)}\mathcal{I} - \mathcal{K}^*_B\right)^{-1} [g].
	\end{aligned} 
	$$
	From the above expressions, it is immediate to deduce that
	$$ 
	\|\phi\|_{L^2(\partial B)} + \|\psi\|_{L^2(\partial B)} \leq C(
	\|f\|_{H^1(\partial B)} + \|g\|_{L^2(\partial B)}), 
	$$
	where $C$ does not depend on $\varepsilon$ and $\omega$.
\end{proof}
Now we are in position to prove Proposition \ref{t1}.

\begin{proof}[Proof of Proposition \ref{t1}]
	Let 
	$$\widetilde{\phi}(\tilde{x}) = \phi(\varepsilon \tilde{x} + z), \hspace{3mm} \tilde{x} \in \partial B, $$
	and define $\widetilde{\psi}$, $\widetilde{F}$ and $\widetilde{G}$ likewise. By a change of variables, \eqref{1} reads as follows: 
	\begin{equation*}
	\begin{cases}
	\mathcal{S}_B^{\frac{\varepsilon \omega}{\sqrt{k}}} [\widetilde{\phi}] - \mathcal{S}_B^{\varepsilon \omega}[\widetilde{\psi}] = \varepsilon^{-1}\widetilde{F},\\
	\displaystyle k \frac{\partial \mathcal{S}_B^{\frac{\varepsilon \omega}{\sqrt{k}}} [\widetilde{\phi}]}{\partial \nu} \biggr |_{-} - \frac{\partial \mathcal{S}_B^{\varepsilon \omega}[\widetilde{\psi}]}{\partial \nu} \biggr |_{+} = \widetilde{G},
	\end{cases}
	\text{ on } \partial B.
	\label{3}
	\end{equation*} 
	Consider the operator $T:L^2(\partial B) \times L^2(\partial B) \longrightarrow H^1(\partial B) \times L^2(\partial B)$ defined by
	\begin{equation}
	T(\widetilde{\phi},\widetilde{\psi}) := \left(\mathcal{S}_B^{\frac{\varepsilon \omega}{\sqrt{k}}} [\widetilde{\phi}] - \mathcal{S}_B^{\varepsilon \omega}[\widetilde{\psi}], \left. k \frac{\partial \mathcal{S}_B^{\frac{\varepsilon \omega}{\sqrt{k}}} [\widetilde{\phi}]}{\partial \nu}\right|_{-} - \left. \frac{\partial \mathcal{S}_B^{\varepsilon \omega}[\widetilde{\psi}]}{\partial \nu}\right|_{+}\right) .
	\label{eq:T-operator} 
	\end{equation}
	$T$ can be decomposed as
	\begin{equation*}
	T=T_0+T_{\varepsilon}, 
	\label{5} 
	\end{equation*}
	where
	\begin{equation*}
	T_0(\widetilde{\phi},\widetilde{\psi}) := \left(\mathcal{S}_B^{0} [\widetilde{\phi}] + \beta_{\varepsilon \omega} \int_{\partial B} \widetilde{\phi}- \mathcal{S}_B^{0}[\widetilde{\psi}] - \beta_{\varepsilon \omega} \int_{\partial B} \widetilde{\psi}, \left. k \frac{\partial \mathcal{S}_B^{0} [\widetilde{\phi}]}{\partial \nu}\right|_{-} - \left. \frac{\partial \mathcal{S}_B^{0}[\widetilde{\psi}]}{\partial \nu}\right|_{+}\right),
	\label{6} 
	\end{equation*}
	and 
	\begin{equation*}
	T_{\varepsilon}:=T-T_0.
	\label{7} 
	\end{equation*}
	For $\varepsilon \omega<\varepsilon_0$, and $\varepsilon_0$ small enough, Lemma \ref{t0} implies that 
	\begin{equation*}
	\|T_{\varepsilon} (\widetilde{\phi},\widetilde{\psi})\|_{H^1 \times L^2} \leq C (\varepsilon \omega)^2 \,\ln (\varepsilon \omega)\, ( \| \widetilde{\phi} \|_{L^2} + \| \widetilde{\psi}\|_{L^2}),
	\label{8}
	\end{equation*}
	where $C$ does not depend on $\varepsilon$ nor $\omega$.
	Since $T_0$ is invertible \cite{boundary layer, Low freq, Interface}, $T$ is invertible for $\varepsilon \omega$ small enough and
	\begin{equation*}
	T^{-1} = T^{-1}_0 + E,
	\label{12}
	\end{equation*}
	where the operator $E$ satisfies
	\begin{equation*}
	\|E(\varepsilon^{-1}\widetilde{F},\widetilde{G})\|_{L^2 \times L^2} \leq C (\varepsilon \omega)^2 \,\ln (\varepsilon \omega)\,  \|(\varepsilon^{-1}\widetilde{F},\widetilde{G})\|_{H^1 \times L^2},
	\label{13}
	\end{equation*}
	with $C$ being independent of $\widetilde{F}$, $\widetilde{G}$, $\varepsilon$ and $\omega$. Finally, we have
	\begin{equation*}
	(\widetilde{\phi},\widetilde{\psi}) = T^{-1}(\varepsilon^{-1}\widetilde{F},\widetilde{G}) = T^{-1}_0(\varepsilon^{-1}\widetilde{F},\widetilde{G}) + E(\varepsilon^{-1}\widetilde{F},\widetilde{G}) = (\widetilde{\phi}_0,\widetilde{\psi}_0) + E(\varepsilon^{-1}\widetilde{F},\widetilde{G}) .
	\label{14}
	\end{equation*}
	By applying Lemma \ref{t2}, and assuming $\varepsilon \omega$ small enough, it follows that 
	\begin{equation*}
	\begin{aligned} 
	\|(\widetilde{\phi},\widetilde{\psi})\|_{L^2 \times L^2} & \leq C \|(\varepsilon^{-1}\widetilde{F},\widetilde{G})\|_{H^1 \times L^2}  + C (\varepsilon \omega)^2 \,\ln (\varepsilon \omega)\, \|(\varepsilon^{-1}\widetilde{F},\widetilde{G})\|_{H^1\times L^2} \\ & \leq C \|(\varepsilon^{-1}\widetilde{F},\widetilde{G})\|_{H^1 \times L^2},
	\end{aligned} 
	\label{16}
	\end{equation*}
	where $C$ does not depend on $\varepsilon$ and $\omega$. By scaling back, we get inequality \eqref{2}. 
	
\end{proof}

\vspace{1mm}
	
\section{Frequency-domain asymptotic expansion} 

\label{sec:freq-domain-ae}

Let $D$ be as in the previous section, i.e., $D=\epsilon B +z$. For $x \in \partial D $, $z$ away from the location  $y$ of the source, we consider the truncated Taylor series of the background field 
\begin{equation*}
V_{y, n}(x, \omega) := \sum\limits_{|\alpha|=0}^{n} \frac{\partial^\alpha_z V_y (z, \omega)}{\alpha!} (x-z)^\alpha.
\label{AE:53}
\end{equation*}
Let $(\phi_n, \psi_n) \in L^2(\partial D) \times L^2(\partial D)$ be the unique solution of
\begin{equation}
\begin{cases}
\mathcal{S}_D^{\frac{\omega}{\sqrt{k}}} [\phi_n] - \mathcal{S}_D^{\omega}[\psi_n] = V_{y,n+1},\\
\displaystyle k \frac{\partial \mathcal{S}_D^{\frac{\omega}{\sqrt{k}}} [\phi_n]}{\partial \nu}\biggr |_{-} - \frac{\partial \mathcal{S}_D^{\omega}[\psi_n]}{\partial \nu}\biggr |_{+} = \frac{\partial V_{y,n+1}}{\partial \nu},
\end{cases}
\text{ on } \partial D.
\label{AE:54}
\end{equation}
Then $(\phi - \phi_n, \psi - \psi_n)$ is the unique solution of 
\begin{equation*}
\begin{cases}
\mathcal{S}_D^{\frac{\omega}{\sqrt{k}}} [\phi - \phi_n] - \mathcal{S}_D^{\omega}[\psi - \psi_n] = V_y-V_{y,n+1},\\
\displaystyle k \frac{\partial \mathcal{S}_D^{\frac{\omega}{\sqrt{k}}} [\phi - \phi_n]}{\partial \nu} \biggr |_{-} - \frac{\partial \mathcal{S}_D^{\omega}[\psi - \psi_n]}{\partial \nu} \biggr |_{+} = \frac{\partial (V_y - V_{y,n+1})}{\partial \nu},
\end{cases}
\text{ on } \partial D.
\label{AE:55}
\end{equation*}
By Proposition \ref{t1}, we have
\begin{equation*}
\begin{aligned} 
\|\phi - \phi_n\|_{L^2(\partial D)} + \|\psi - \psi_n\|_{L^2(\partial D)} \leq C (\varepsilon^{-1} \| V_y - V_{y,n+1} \|_{H^1(\partial D)} + \| \nabla (V_y - V_{y,n+1}) \|_{L^2(\partial D)} ),
\end{aligned} 
\label{AE:56}
\end{equation*} 
where $C$ does not depend on $\varepsilon$ and $\omega$. By definition of $V_y - V_{y,n+1}$, we have
\begin{equation*}
\|V_y - V_{y,n+1}\|_{L^2(\partial D)} = \left( \int_{\partial D} |V_y - V_{y,n+1}|^2 \right)^{1/2} \leq |\partial D|^{1/2} \|V_y - V_{y,n+1}\|_{L^{\infty}(\partial D)}.
\label{AE:57}
\end{equation*} 
Hereinafter, we assume that $\omega \in (0, \varepsilon^{-\gamma})$, with $0<\gamma <1$. The following approximation for the Hankel function is needed \cite{Abramowitz, Lebedev, Watson}:
$$
H^{(1)}_n(t) \sim \sqrt{2/(\pi t)} e^{i(t-1/2n\pi -1/4\pi)}, \; \text{ for } t\gg 1.
$$
By expanding $V_y - V_{y,n+1}$ in Taylor series, we obtain
\begin{equation*}
\|\phi - \phi_n\|_{L^2(\partial D)} + \|\psi - \psi_n\|_{L^2(\partial D)} \leq C |\partial D|^{1/2} \varepsilon^{n+1} (1+\omega^{n+3/2}). 
\label{AE:60}
\end{equation*}
For $x \in \mathbb{R}^2 \setminus \overline{D}$, $x \neq y$, $\operatorname{dist}(x,D)\geq c_1>0$, the representation formula \eqref{AE:51} yields 
\begin{equation*}
v(x, \omega) - V_y(x, \omega) = \mathcal{S}^{\omega}_D[\psi_n](x) + \mathcal{S}^{\omega}_D[\psi - \psi_n](x).
\label{AE:61}  
\end{equation*}
By applying the Cauchy-Schwarz inequality, we obtain
\begin{equation*}
\begin{aligned}
|\mathcal{S}^{\omega}_D[\psi - \psi_n](x)| & \leq \left[\int_{\partial D} |\Gamma_{\omega}(x,s)|^2 d \sigma(s)\right]^{1/2} \|\psi-\psi_n\|_{L^2(\partial D)} \\& \leq \|\Gamma_{\omega}(x, \cdot)\|_{L^{\infty}(\partial D)} |\partial D|^{1/2} \|\psi-\psi_n\|_{L^2(\partial D)}. 
\end{aligned} 
\label{AE:62}
\end{equation*}
Then, we have
\begin{equation}
v(x, \omega) - V_y(x, \omega) = \mathcal{S}^{\omega}_D[\psi_n](x) + O(\varepsilon^{n+2}  (|\ln \omega| + \omega^{n+1})).
\label{AE:62bis}
\end{equation}

For each multi-index $\alpha$, define $(\phi_\alpha, \psi_\alpha)$ to be the unique solution to
\begin{equation}
\begin{cases}
\mathcal{S}_B^{\varepsilon \omega }[\phi_\alpha](\tilde{x}) - \mathcal{S}_B^{\frac{\varepsilon \omega }{\sqrt{k}}} [\psi_\alpha](\tilde{x}) = \tilde{x}^\alpha,\\
\displaystyle k \frac{\partial \mathcal{S}_B^{\varepsilon \omega }[\phi_\alpha]}{\partial \nu} \biggr|_{-}(\tilde{x}) - \frac{\partial \mathcal{S}_B^{\frac{\varepsilon \omega }{\sqrt{k}}} [\psi_\alpha]}{\partial \nu} \biggr |_{+}(\tilde{x}) = \frac{\partial \tilde{x}^\alpha}{\partial \nu},
\end{cases}
\tilde{x} \in \partial B,
\label{AE:16bis}
\end{equation}
where $\tilde{x} = \varepsilon^{-1} (x-z)$, $x \in \partial D$. The following proposition has been proved in \cite{reconstruction}.
\begin{prop}
	We claim that
	\begin{equation*}
	\phi_n(x)= \sum\limits_{|\alpha|=0}^{n+1} \varepsilon^{|\alpha|-1} \frac{\partial^\alpha_z V_{y}(z,\omega)}{\alpha!} \phi_\alpha(\varepsilon^{-1} (x-z)),
	\label{AE:17bis}
	\end{equation*}	
	\begin{equation}
	\psi_n(x)= \sum\limits_{|\alpha|=0}^{n+1} \varepsilon^{|\alpha|-1} \frac{\partial^\alpha_z V_{y}(z,\omega)}{\alpha!} \psi_\alpha(\varepsilon^{-1} (x-z)),
	\label{AE:18bis}
	\end{equation}
	for $x \in \partial D$ and $(\phi_n, \psi_n)$ defined as in \eqref{AE:54}.
\end{prop}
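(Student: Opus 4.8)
The plan is to transport the whole system to the fixed domain $\partial B$ by the homothety $\tilde x=\varepsilon^{-1}(x-z)$ and then combine linearity with the uniqueness of the solution to \eqref{AE:54}. First I would introduce the rescaled densities $\widetilde\phi_n(\tilde x)=\phi_n(\varepsilon\tilde x+z)$ and $\widetilde\psi_n(\tilde x)=\psi_n(\varepsilon\tilde x+z)$, exactly as in the proof of Proposition \ref{t1}. The key ingredient is the way the single-layer potential scales: writing $s=\varepsilon\tilde s+z$ and using $\Gamma_\omega\!\big(\varepsilon(\tilde x-\tilde s)\big)=\Gamma_{\varepsilon\omega}(\tilde x-\tilde s)$ together with $d\sigma(s)=\varepsilon\,d\sigma(\tilde s)$, one gets $\mathcal S_D^{\omega}[\phi_n](x)=\varepsilon\,\mathcal S_B^{\varepsilon\omega}[\widetilde\phi_n](\tilde x)$, while the chain rule $\partial/\partial\nu_x=\varepsilon^{-1}\partial/\partial\nu_{\tilde x}$ turns the normal trace into $\partial_\nu\mathcal S_D^{\omega}[\phi_n]\big|_{\pm}(x)=\partial_\nu\mathcal S_B^{\varepsilon\omega}[\widetilde\phi_n]\big|_{\pm}(\tilde x)$, the extra $\varepsilon$ from the potential cancelling the $\varepsilon^{-1}$ from the derivative. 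The same identities hold with $\omega/\sqrt k$ and $\varepsilon\omega/\sqrt k$ in the respective roles.

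Next I would transport the data. Since $(x-z)^\alpha=\varepsilon^{|\alpha|}\tilde x^\alpha$, the truncated Taylor polynomial becomes $V_{y,n+1}(\varepsilon\tilde x+z)=\sum_{|\alpha|=0}^{n+1}\varepsilon^{|\alpha|}\,(\alpha!)^{-1}\partial_z^\alpha V_y(z,\omega)\,\tilde x^\alpha$. Dividing the first equation of \eqref{AE:54} by $\varepsilon$ and applying $\varepsilon^{-1}\partial/\partial\nu_{\tilde x}$ to this polynomial in the second, the rescaled system reads
\begin{equation*}
\begin{cases}
\mathcal S_B^{\varepsilon\omega/\sqrt k}[\widetilde\phi_n]-\mathcal S_B^{\varepsilon\omega}[\widetilde\psi_n]=\displaystyle\sum_{|\alpha|=0}^{n+1}\varepsilon^{|\alpha|-1}\frac{\partial_z^\alpha V_y(z,\omega)}{\alpha!}\,\tilde x^\alpha,\\[2mm]
\displaystyle k\,\frac{\partial\mathcal S_B^{\varepsilon\omega/\sqrt k}[\widetilde\phi_n]}{\partial\nu}\Big|_{-}-\frac{\partial\mathcal S_B^{\varepsilon\omega}[\widetilde\psi_n]}{\partial\nu}\Big|_{+}=\sum_{|\alpha|=0}^{n+1}\varepsilon^{|\alpha|-1}\frac{\partial_z^\alpha V_y(z,\omega)}{\alpha!}\,\frac{\partial\tilde x^\alpha}{\partial\nu},
\end{cases}
\end{equation*}
on $\partial B$. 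The decisive observation is that the \emph{same} scalar weight $\varepsilon^{|\alpha|-1}(\alpha!)^{-1}\partial_z^\alpha V_y(z,\omega)$ multiplies $\tilde x^\alpha$ in the first (transmission) equation and $\partial_\nu\tilde x^\alpha$ in the second (flux) equation, which is exactly what permits one linear combination to solve both equations simultaneously.

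Finally I would conclude by superposition and uniqueness. By \eqref{AE:16bis}, for every multi-index $\alpha$ the pair $(\phi_\alpha,\psi_\alpha)$ is the unique solution of this same rescaled system when the data are the monomial $\tilde x^\alpha$ and its conormal derivative $\partial\tilde x^\alpha/\partial\nu$. Hence, setting $c_\alpha=\varepsilon^{|\alpha|-1}(\alpha!)^{-1}\partial_z^\alpha V_y(z,\omega)$, the superposition $\big(\sum_\alpha c_\alpha\phi_\alpha,\sum_\alpha c_\alpha\psi_\alpha\big)$ solves the rescaled system above. Since that system has a unique solution in $L^2(\partial B)\times L^2(\partial B)$ — the governing operator being invertible for $\varepsilon\omega$ small, as established in the proof of Proposition \ref{t1}, provided $\omega^2$ is not a Dirichlet eigenvalue of $-\Delta$ on $D$ — this superposition must coincide with $(\widetilde\phi_n,\widetilde\psi_n)$. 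Undoing the scaling via $\phi_n(x)=\widetilde\phi_n(\varepsilon^{-1}(x-z))$ and $\psi_n(x)=\widetilde\psi_n(\varepsilon^{-1}(x-z))$ then yields the two claimed identities. I expect the only genuinely delicate point to be the bookkeeping of the powers of $\varepsilon$ across the two equations, and in particular checking that the cancellation $\varepsilon\cdot\varepsilon^{-1}=1$ in the flux equation leaves the \emph{same} weight $\varepsilon^{|\alpha|-1}$ as in the transmission equation; once this is recorded, the remainder is a routine application of linearity and uniqueness.
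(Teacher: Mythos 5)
Your proof is correct, and it is the standard rescaling--superposition--uniqueness argument; note that the paper itself gives no proof of this proposition (it is quoted from \cite{reconstruction}), and that reference proceeds essentially as you do, so there is nothing to compare beyond confirming that your argument fills the gap the paper delegates to the literature. One point worth flagging: as printed, system \eqref{AE:16bis} attaches the wavenumber $\varepsilon\omega$ to $\phi_\alpha$ and $\varepsilon\omega/\sqrt{k}$ to $\psi_\alpha$, which is the opposite pairing from the one produced by your (correct) rescaling of \eqref{AE:54}; this is evidently a typo in the paper --- the density carrying the factor $k$ and the interior trace must have wavenumber $\varepsilon\omega/\sqrt{k}$, the other one $\varepsilon\omega$ --- and your identification of the superposition $\sum_\alpha c_\alpha(\phi_\alpha,\psi_\alpha)$ with $(\widetilde\phi_n,\widetilde\psi_n)$ tacitly corrects it. Finally, your key identity $\Gamma_\omega(\varepsilon u)=\Gamma_{\varepsilon\omega}(u)$, which makes the single-layer potential scale with a clean factor $\varepsilon$ and the normal trace scale with factor $1$, is exact only in dimension two; this is the case at hand, but in the three-dimensional setting of Appendix \ref{appendix1} an extra factor $\varepsilon^{-1}$ appears, which is precisely why the expansion there carries the additional power $\varepsilon^{|\alpha|+|\beta|+1}$.
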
 
Expansion \eqref{AE:62bis} together with formula \eqref{AE:18bis} yields:
\begin{equation*}
v_y(x, \omega) - V_y(x, \omega) = \sum\limits_{|\alpha|=0}^{n+1} \varepsilon^{|\alpha|-1} \frac{\partial_z^\alpha V_y (z, \omega)}{\alpha!} \mathcal{S}_D^{\omega} [\psi_\alpha(\varepsilon^{-1} (\cdot-z))](x) + O(\varepsilon^{n+2}  (|\ln \omega| + \omega^{n+1})), 
\end{equation*}
for $x \in \mathbb{R}^2 \setminus \overline{D}$ and $x \neq y$. Note that
\begin{equation*}
\mathcal{S}_D^{\omega} [\psi_\alpha(\varepsilon^{-1} (\cdot-z))](x) = \int_{\partial D} \Gamma_{\omega}(x,s) \psi_\alpha(\varepsilon^{-1} (s-z)) \; d\sigma(s) = \varepsilon \int_{\partial B} \Gamma_{\omega}(x, \varepsilon \tilde{s} + z) \psi_\alpha (\tilde{s}) \; d\sigma(\tilde{s}).
\end{equation*}
By a straightforward calculation, we get $\|\Gamma_{\omega}(x,\cdot)\|_{C^{n+2}(\overline{D})} \leq C(1+ \omega^{n+3/2})$, where $C$ does not depend on $\omega$. Therefore, for sufficiently small $\varepsilon$, we have
\begin{equation*}
\Gamma_{\omega}(x,\varepsilon \tilde{s} + z) = \sum\limits_{|\beta|=0}^{n+1} \frac{\varepsilon^{|\beta|}}{\beta!} \partial_z^\beta \Gamma_{\omega} (x,z) \tilde{s}^\beta + O(\varepsilon^{n+1} (1+\omega^{n+3/2})).
\end{equation*}
Finally, we get 
\begin{equation*}
\mathcal{S}_D^{\omega} [\psi_\alpha(\varepsilon^{-1} (\cdot-z))](x) = \sum\limits_{|\beta|=0}^{n+1} \frac{\varepsilon^{|\beta|+1}}{\beta!} \partial_z^\beta \Gamma_{\omega} (x,z) \int_{\partial B} \tilde{s}^\beta \psi_\alpha(\tilde{s}) \; d\sigma(\tilde{s}) + O(\varepsilon^{n+2}(1+\omega^{n+3/2})).
\end{equation*}
For multi-indices $\alpha$ and $\beta$ in $\mathbb{N}^2$, the frequency dependent polarization tensors (FDPTs) $\widehat{W}_{\alpha \beta} := \widehat{W}_{\alpha \beta}(B, \varepsilon \omega, \frac{\varepsilon \omega}{\sqrt{k}})$ are defined as \cite{FDPTs,boundary layer}
\begin{equation} \label{eq:What}
\widehat{W}_{\alpha \beta} := \int_{\partial B} \tilde{s}^\beta \psi_\alpha(\tilde{s}) \; d\sigma(\tilde{s}).
\end{equation}
We obtain the following theorem.

\begin{theo}\label{theo3}
	Suppose that $\omega^2$ is not a Dirichlet eigenvalue for $-\Delta$ on $D$ and $\omega \in (0, \varepsilon^{-\gamma})$, with $0<\gamma <1$. The following asymptotic expansion holds:
	\begin{equation}
	v_y(x, \omega) - V_y(x, \omega) = \sum\limits_{|\beta|=0}^{n+1} \sum\limits_{|\alpha|=0}^{n-|\beta|+1} \frac{\varepsilon^{|\alpha|+|\beta|}}{\alpha! \; \beta!} \partial_z^\alpha V_y (z, \omega) \partial_z^\beta \Gamma_{\omega} (x,z) \widehat{W}_{\alpha \beta} + O(\varepsilon^{n+2}  (|\ln \omega| + \omega^{n+1})), 
	\label{AE:63}
	\end{equation}
	for $x \in \mathbb{R}^2 \setminus \overline{D}$.
	\end{theo}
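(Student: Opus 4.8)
The plan is to read off \eqref{AE:63} as a bookkeeping consequence of the displays already in place: the representation formula \eqref{AE:51}, the tail estimate \eqref{AE:62bis}, the closed form \eqref{AE:18bis} for $\psi_n$, and the rescaled kernel expansion computed just above the statement. The only genuine work is to collect the resulting terms by total order in $\varepsilon$ and to verify that every discarded contribution fits inside the stated remainder.

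First I would fix $x$ with $\operatorname{dist}(x,D)\geq c_1$ and use \eqref{AE:51} to write $v_y-V_y=\mathcal{S}_D^\omega[\psi]$, splitting $\psi=\psi_n+(\psi-\psi_n)$. The tail $\mathcal{S}_D^\omega[\psi-\psi_n]$ is handled exactly as in the preceding lines: feed the Taylor remainder $V_y-V_{y,n+1}$ into the stability bound of Proposition \ref{t1}, estimate its $H^1(\partial D)$ norm through the large-argument Hankel asymptotics, and apply Cauchy--Schwarz together with the $L^\infty$ bound on $\Gamma_\omega(x,\cdot)$. This reproduces \eqref{AE:62bis}, i.e.\ $v_y-V_y=\mathcal{S}_D^\omega[\psi_n]+O(\varepsilon^{n+2}(|\ln\omega|+\omega^{n+1}))$, the factor $|\ln\omega|$ being precisely the trace of the two-dimensional logarithmic singularity carried by $\beta_{\varepsilon\omega}$.

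Next I would insert the representation \eqref{AE:18bis} of $\psi_n$ into $\mathcal{S}_D^\omega[\psi_n]$, turning it into a finite sum over $\alpha$ of $\varepsilon^{|\alpha|-1}\frac{\partial_z^\alpha V_y}{\alpha!}\,\mathcal{S}_D^\omega[\psi_\alpha(\varepsilon^{-1}(\cdot-z))]$. For each term I would rescale by $s=\varepsilon\tilde{s}+z$ and Taylor expand the kernel $\Gamma_\omega(x,\varepsilon\tilde{s}+z)$ about $z$ to order $n+1$, bounding the expansion tail through $\|\Gamma_\omega(x,\cdot)\|_{C^{n+2}(\overline{D})}\leq C(1+\omega^{n+3/2})$. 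The moment integrals $\int_{\partial B}\tilde{s}^\beta\psi_\alpha(\tilde{s})\,d\sigma(\tilde{s})$ are then recognized as the FDPTs $\widehat{W}_{\alpha\beta}$ of \eqref{eq:What}, producing the double sum $\sum_\alpha\sum_\beta\frac{\varepsilon^{|\alpha|+|\beta|}}{\alpha!\,\beta!}\partial_z^\alpha V_y\,\partial_z^\beta\Gamma_\omega\,\widehat{W}_{\alpha\beta}$. I would then do the degree counting: any term with $|\alpha|+|\beta|\geq n+2$ carries a factor $\varepsilon^{|\alpha|+|\beta|}\le\varepsilon^{n+2}$ and is absorbed into the remainder, leaving exactly the range $0\le|\beta|\le n+1$, $0\le|\alpha|\le n-|\beta|+1$ displayed in \eqref{AE:63}.

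The step I expect to be the main obstacle is the error reconciliation, not the algebra. Two distinct remainders must be merged into the single $O(\varepsilon^{n+2}(|\ln\omega|+\omega^{n+1}))$: the layer-potential tail from Proposition \ref{t1}, weighted by $|\ln\omega|+\omega^{n+1}$, and the kernel-expansion tail, weighted by $1+\omega^{n+3/2}$. The delicate point is that substituting \eqref{AE:18bis} multiplies the kernel error by the prefactors $\varepsilon^{|\alpha|-1}$, whose worst case $\varepsilon^{-1}$ at $|\alpha|=0$ threatens to degrade the order from $\varepsilon^{n+2}$ to $\varepsilon^{n+1}$; controlling this requires exploiting the $\varepsilon$-gain from the surface measure $|\partial D|\sim\varepsilon$ together with the decay of $\partial_z^\alpha V_y(z,\omega)$, and checking that the combined frequency weight stays within $|\ln\omega|+\omega^{n+1}$ on the admissible band $\omega\in(0,\varepsilon^{-\gamma})$. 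All bounds must moreover be uniform in $x$ on $\{\operatorname{dist}(x,D)\ge c_1\}$.
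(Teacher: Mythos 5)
Your proposal is correct and follows essentially the same route as the paper: the representation formula \eqref{AE:51} plus the stability estimate of Proposition \ref{t1} to obtain \eqref{AE:62bis}, substitution of \eqref{AE:18bis}, rescaling and Taylor expansion of $\Gamma_\omega(x,\varepsilon\tilde{s}+z)$ to produce the FDPTs \eqref{eq:What}, and degree counting in $\varepsilon$ to absorb the terms with $|\alpha|+|\beta|\ge n+2$. The obstacle you flag at the end --- reconciling the kernel-expansion remainder with the $\varepsilon^{|\alpha|-1}$ prefactors by exploiting the surface-measure factor $|\partial D|\sim\varepsilon$ and the large-$\omega$ decay (and mild $|\ln\omega|$ growth) of $\partial_z^\alpha V_y(z,\omega)$ --- is exactly the right resolution, and your treatment of it is in fact more careful than the paper's own exposition of that step.
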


\vspace{1mm}
	
\section{Time-domain asymptotic expansion} 

\label{sec:tdae}


In this section, we abandon the frequency-domain to inspect our problem in the time-domain.
We define the emitted wave generated at $y \in \mathbb{R}^2\setminus \overline{D}$ as 
	\begin{equation*}
	U_y(x,t) := \frac{H(t-|x-y|)}{2 \pi \sqrt{t^2-|x-y|^2}},
	\label{AE:64}
	\end{equation*} 
	where $H$ is the Heaviside function at $0$ \cite{time-rad}. In particular, $U_y$ satisfies the wave equation
	\begin{equation*}
	\begin{cases}
	(\partial_t^2 - \Delta ) U_y(x,t) = \delta_{x=y} \delta_{t=0}, & (x,t) \in \mathbb{R}^2 \times \mathbb{R},\\
	U_y(x,t)=0 & \text{for } x \in \mathbb{R}^2 \text{ and } t\ll 0.
	\end{cases} 
	\label{AE:65}
	\end{equation*}
	In the presence of a small acoustic inclusion $D$ of contrast $k$ (as described in Section \ref{sec:problem-formulation}), the perturbed wave  $u_y = u_y(x,t)$, is solution to
	\begin{equation*}
	\begin{cases}
	\partial_t^2 u_y - \nabla \cdot (\chi(\mathbb{R}^2 \setminus \overline{D}) + k \chi(D)) \nabla u_y = \delta_{x=y}\delta_{t=0} & \text{ in } \mathbb{R}^2 \times (0, \infty), \\
	u_y(x,t) = 0 & \text{ for } x \in \mathbb{R}^2 \text{ and } t \ll 0.
	\end{cases}
	\label{AE:69}
	\end{equation*}

	For $\rho>0$, we define the operator $P_{\rho}$ acting on tempered distributions by
	\begin{equation}
	P_{\rho}[\psi](t) = \int\limits_{|\omega| \leq \rho} e^{-i \omega t} \widehat{\psi}(\omega) \; d\omega,
	\label{AE:29} 
	\end{equation}
	where $\widehat{\psi}$ is the Fourier transform of $\psi$. The operator $P_{\rho}$ truncates the high-frequency components of $\psi$  \cite{Transient, Numerical Transient}. Note that
	\begin{equation*}
	P_{\rho}[U_y](x,t) = \int\limits_{|\omega| \leq \rho} e^{-i\omega t} \left(\int_{\mathbb{R}} e^{i \omega t} U_y(x,t) \; dt\right) \; d\omega = \int\limits_{|\omega| \leq \rho} e^{-i\omega t} \frac{i}{4} H_0^{(1)}(\omega |x-y|),
	\label{AE:67}
	\end{equation*}
	and satisfies
	\begin{equation*}
	(\partial_t^2 - \Delta ) P_{\rho}[U_y] (x,t) = \delta_{x=y}\psi_{\rho}(t)\quad \mbox{ in } \mathbb{R}^2 \times \mathbb{R}, 
	\label{AE:68}
	\end{equation*}
	where
	$$\psi_{\rho} (t) := \frac{2 \sin \rho t }{t} = \int\limits_{|\omega| \leq \rho}e^{-i \omega t} \; d\omega.$$	

	From Theorem \ref{theo3}, we have
	\begin{equation*}
	\begin{aligned}
	\int\limits_{|\omega| \leq \rho} e^{-i\omega t} (v_y(x,\omega) - V_y(x,\omega)) \; d\omega = & \sum\limits_{|\beta|=0}^{n+1} \sum\limits_{|\alpha|=0}^{n-|\beta|+1} \frac{\varepsilon^{|\alpha|+|\beta|}}{\alpha! \; \beta!} \int\limits_{|\omega| \leq \rho} e^{-i \omega t} \partial_z^\alpha V_y (z, \omega) \partial_z^\beta \Gamma_{\omega} (x,z) \widehat{W}_{\alpha \beta} \; d \omega \\ & + \int\limits_{|\omega| \leq \rho} e^{-i \omega t} R(x,\omega) \; d \omega.
	\end{aligned}
	\label{AE:70}
	\end{equation*}
	Suppose that $\rho = O(\varepsilon^{-\gamma})$ for some $\gamma <1$. Then
	\begin{equation*}
	\begin{aligned}
	 \int\limits_{|\omega| \leq \rho} e^{-i \omega t} R(x,\omega) \; d \omega  = O\left(\varepsilon^{(n+2)(1- \gamma)}\right).
	\end{aligned}
	\end{equation*}
	
	Notice that the following identity holds 
	\begin{equation*}
	\int\limits_{|\omega| \leq \rho} e^{-i \omega t} \partial_z^\alpha V_y (z, \omega) \partial_z^\beta \Gamma_{\omega} (x,z) \widehat{W}_{\alpha \beta} \; d \omega 
	\end{equation*}	
	\begin{equation*}= \int_{\mathbb{R}^2} \partial_z^\alpha P_{\rho}[U_y] (z, t-\tau - \tau') \partial_z^\beta P_{\rho}[U_z] (x,\tau) \left ( \; \int\limits_{|\omega| \leq \rho} e^{-i \omega \tau'} \widehat{W}_{\alpha \beta}(\omega) d \omega  \right ) \; d\tau \; d\tau'.
	\label{AE:72}
	\end{equation*}
	
	This suggests the following definition.
	\begin{definition} \label{def:TDPTs}
		For $\rho < 1/\varepsilon$ and multi-indices $\alpha$ and $\beta$, the two-dimensional truncated time-dependent polarization tensors (hereinafter, TDPTs), $P_{\rho} [W_{\alpha \beta}]$, are defined as:
		\begin{equation}
		P_{\rho} [W_{\alpha \beta}](D,k,t) := \int\limits_{|\omega| \leq \rho} e^{-i\omega t} \widehat{W}_{\alpha \beta}(\omega) \; d\omega,
		\label{2TDPTs}
		\end{equation}
		where $\widehat{W}_{\alpha \beta}$ are the two-dimensional FDPTs.
	\end{definition}	
	\begin{rem} \label{rem:threshold-condition}
		The condition on the truncating threshold $\rho$ in Definition \ref{def:TDPTs}, i.e.,  $\rho < 1/\varepsilon$, boils down to considering only the frequencies $\omega$ for which the FDPTs $\widehat{W}_{\alpha \beta}(\omega)$ are well-defined in the integral transform $P_\rho$.
	\end{rem}	
	\begin{rem} \label{rem:notation}	We warn the reader that the symbol used in Definition \ref{def:TDPTs} for denoting the TDPTs, i.e.,  $P_{\rho} [W_{\alpha \beta}](D,k,t) $, is an abuse of notation, since no $W_{\alpha \beta}$ has been defined, see Remark \ref{rem:threshold-condition}. However, we preferred to keep this notation to remain consistent with the definition of $P_\rho$ given in \cite{Transient, Numerical Transient}.
	\end{rem}
	
	Thus, we have proved the following theorem.
	\begin{theo}\label{theo4}
		For $0<\gamma<1$, the following asymptotic expansion holds:
		\begin{equation}
		\begin{aligned}
		& P_{\rho}[u_y](x,t) = P_{\rho}[U_y](x,t) \\ & + \sum\limits_{|\beta|=0}^{n+1} \sum\limits_{|\alpha|=0}^{n-|\beta|+1} \frac{\varepsilon^{|\alpha|+|\beta|}}{\alpha! \; \beta!} \int_{\mathbb{R}} \partial_z^\beta P_{\rho}[U_z] (x,\tau) \left(\int_{\mathbb{R}} \partial_z^\alpha  P_{\rho}[U_y] (z, t-\tau - \tau') P_{\rho} [W_{\alpha \beta}](\tau') \; d\tau' \right) \; d\tau \\ & + O\left(\varepsilon^{(n+2)(1-\gamma)}\right),
		\end{aligned}
		\label{eq:times-expansion}
		\end{equation}
		where $x \in \mathbb{R}^2 \setminus \overline{D}$, $D=\varepsilon B +z$, $|B|=1$, $P_{\rho} [W_{\alpha \beta}]$ are the TDPTs defined in Definition \ref{def:TDPTs} and $\rho=O(\varepsilon^{-\gamma})$.
\end{theo}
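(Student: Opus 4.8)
The plan is to obtain \eqref{eq:times-expansion} by applying the truncated inverse transform $P_\rho$ to the frequency-domain expansion \eqref{AE:63} of Theorem \ref{theo3} and reading off each resulting frequency integral in the time domain. First I would record the Fourier correspondence that underlies the whole argument: transforming the time-domain wave equation for $u_y$ in the variable $t$ (so that $\partial_t^2\mapsto-\omega^2$) turns it into exactly the Helmholtz transmission problem \eqref{AE:49} solved by $v_y$, and likewise the free wave $U_y$ transforms into $V_y$; in particular $\widehat{U_y}(x,\omega)=V_y(x,\omega)=\Gamma_\omega(x-y)$ and $\Gamma_\omega(x-z)=\widehat{U_z}(x,\omega)$, as already noted below \eqref{AE:29}. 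Consequently $P_\rho[u_y](x,t)-P_\rho[U_y](x,t)=\int_{|\omega|\le\rho}e^{-i\omega t}\bigl(v_y(x,\omega)-V_y(x,\omega)\bigr)\,d\omega$, which is precisely the quantity governed by \eqref{AE:63}.

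Next I would control the remainder. Writing $R(x,\omega)$ for the $O(\varepsilon^{n+2}(|\ln\omega|+\omega^{n+1}))$ term of \eqref{AE:63}, which by the stability estimate of Proposition \ref{t1} is bounded uniformly for $\omega\in(0,\varepsilon^{-\gamma})$, I integrate the pointwise bound over the band:
\[
\Bigl|\int_{|\omega|\le\rho}e^{-i\omega t}R(x,\omega)\,d\omega\Bigr|\le C\varepsilon^{n+2}\int_{|\omega|\le\rho}\bigl(|\ln\omega|+\omega^{n+1}\bigr)\,d\omega .
\]
The logarithmic singularity at the origin is integrable and contributes a term of order $\rho|\ln\rho|$, while $\int_{|\omega|\le\rho}\omega^{n+1}\,d\omega=O(\rho^{n+2})$ dominates. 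With $\rho=O(\varepsilon^{-\gamma})$ this gives $\varepsilon^{n+2}\rho^{n+2}=O(\varepsilon^{(n+2)(1-\gamma)})$, the claimed error, the condition $\gamma<1$ being exactly what guarantees a net negative power of $\varepsilon$.

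For the leading sum I would argue term by term, the $z$-derivatives $\partial_z^\alpha$ and $\partial_z^\beta$ passing freely through $P_\rho$ since they act on a different variable. The remaining point is the product-to-convolution identity displayed before Definition \ref{def:TDPTs}, which I would derive from the convolution theorem: writing the cutoff $\hat f_\rho:=\hat f\,\mathbf{1}_{\{|\omega|\le\rho\}}$, one has $P_\rho[f]=\mathcal{F}^{-1}[\hat f_\rho]$ up to the normalization fixed in \eqref{AE:29}, and since $\mathbf{1}_{\{|\omega|\le\rho\}}$ is idempotent the truncated triple product of $\widehat{U_y}$, $\widehat{U_z}$ and $\widehat{W}_{\alpha\beta}$ coincides with $\hat f_\rho\hat g_\rho\hat h_\rho$ on the whole line; inverting and using that the inverse transform of a product is the convolution of the inverse transforms produces exactly the double integral in $\tau,\tau'$ appearing in \eqref{eq:times-expansion}. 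Collecting the leading double sum together with the $O(\varepsilon^{(n+2)(1-\gamma)})$ remainder then yields the stated expansion.

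I expect the main obstacle to be the remainder step, specifically ensuring that the constant in the Theorem \ref{theo3} bound is genuinely uniform in $\omega$ over the full range $(0,\varepsilon^{-\gamma})$, so that integrating against the growing threshold $\rho$ still produces a power-saving estimate; the convolution identity, by contrast, is a clean consequence of the convolution theorem once the idempotence of the frequency cutoff is invoked, and the Fourier correspondence $v_y\leftrightarrow u_y$ is already built into the setup.
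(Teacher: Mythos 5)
Your proposal is correct and follows essentially the same route as the paper: apply the truncated transform $P_\rho$ to the frequency-domain expansion of Theorem \ref{theo3}, absorb the transformed remainder into $O\bigl(\varepsilon^{(n+2)(1-\gamma)}\bigr)$ using $\rho=O(\varepsilon^{-\gamma})$, and convert each product term into the double time-convolution via the identity preceding Definition \ref{def:TDPTs}. The only additions on your side are harmless: you spell out the elementary integral bound on the remainder (the $\omega^{n+1}$ term dominating the logarithm) and justify the convolution identity, which the paper merely asserts, by the convolution theorem together with idempotence of the frequency cutoff.
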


In the end, Theorem \ref{theo4} shows that the scattered wave can be written as a truncated expansion having the TDPTs as building blocks. Since these tensors are the Fourier transformed FDPTs, the transient expansion \eqref{eq:times-expansion} provides a proper interpretation of the multi-frequency problem, which can then be naturally tackled in the temporal domain. 

\vspace{1mm}

\vspace{1mm}

\section{Reconstruction methods}

It is already known that the generalized polarization tensors (GPTs) \cite{GPT, GPT2} and the FDPTs \cite{FDPTs} of an inclusion contain a mixture of geometric information and material parameters. In this section we aim at showing that the same holds for the TDPTs of an acoustic inclusion \eqref{2TDPTs} by extending some of the existing methods that has been established for GPTs and FDPTs.

  Firstly, formulas for determining the size, the contrast and the equivalent ellipse of an inclusion are provided in terms of $P_\rho[W_{\alpha\beta}](t)$. Secondly, the optimal control approach of \cite{GPT, GPT2, FDPTs} for recovering shape details of an inclusion is also adapted in order to perform with the TDPTs. Finally, a procedure for the reconstruction of the TDPTs is presented.
 
 In what follows, without loss of generality, the location of the inclusion is supposed to be known. As a matter of fact, the location can be priorly estimated by using, for instance, a MUSIC-type algorithm, see \cite{MUSIC}.

\subsection{Size, contrast and  equivalent ellipse}  

We now extend the well-known procedure to obtain the equivalent ellipse representing the shape of the inclusion for  $P_\rho[\mathcal{W}_{(1)}](t)$, where 
$$
\widehat{\mathcal{W}}_{(1)} = (\widehat{\mathcal{W}}_{\alpha \beta})_{|\alpha|=|\beta|=1} = \varepsilon^{2} (\widehat{W}_{\alpha \beta})_{|\alpha|=|\beta|=1} .
$$
Since we have 
\begin{equation*} 
P_\rho [\mathcal{W}_{(1)}](D,k,t) \to \frac{2 \sin(\rho t)}{t} M(D,k) \hspace{3mm} \text{as } \varepsilon \to 0,
\label{TDPTs-GPTs}
\end{equation*} 
where $M(D,k)$ is the polarization tensor (PT) (\cite{GPT,math and stat method}), the same procedure of \cite{FDPTs} applies by using $P_\rho[\mathcal{W}_{(1)}](t)$ instead of $\widehat{\mathcal{W}}_{(1)}$.

Let $\mathcal{E}$ be the equivalent ellipse associated to the shape $D$, and let $\theta$ be its rotation angle. Then the size $|D|$ and the contrast $k$ of the inclusion can be estimated as follows. We have
$$
|D|\approx \frac{P_\rho [W_{(0,0),(0,0)}](t)}{\int_{|\omega| \leq \rho} e^{-i\omega t}\omega^2 d\omega},
$$
$$
k \approx \frac{|D|(P_\rho[\mathcal{W}'_{11}] + P_\rho[\mathcal{W}'_{22}]) +(t/(2\sin(\rho t))) P_\rho[\mathcal{W}'_{22}] P_\rho[\mathcal{W}'_{11}]}{(t/(2\sin(\rho t)))P_\rho[\mathcal{W}'_{22}]P_\rho[\mathcal{W}'_{11}] - |D|(P_\rho[\mathcal{W}'_{22}] + P_\rho[\mathcal{W}'_{11}])},
$$
where
$$
\widehat{\mathcal{W}}'_{(1)} = R(-\theta)\widehat{\mathcal{W}}_{(1)}R(-\theta)^T,
$$
with $R(-\theta)$ being the rotation by $-\theta$.  

\medskip

Therefore, from the TDPTs $P_\rho[W_{\alpha\beta}](t)$ we are able to recover an approximation of the volume of the inclusion, separating the information on the material property from the geometric features. 

\subsection{Fine shape details} 

\label{sec:optimization}

So far, we reconstructed the contrast $k$, the size $|D|$ and the equivalent ellipse $\mathcal{E}$. We now reconstruct fine details of the shape of the inclusion using the new concept of high-order TDPTs.

Assuming the inclusion to be a small deformation of the reconstructed equivalent ellipse, we can recover the fine details of its shape by (recursively) minimizing over $D$ the time-dependent discrepancy functional defined by:
\begin{equation}
\begin{aligned}  
J^{(K)}(D)(t)& := \sum_{1\leq|\alpha|+|\beta|\leq K}\left| \sum_{\alpha,\beta}a_\alpha b_\beta P_\rho[W_{\alpha\beta}](D,t) -\sum_{\alpha,\beta}a_\alpha b_\beta P_\rho[W_{\alpha\beta}]^{\text{meas}}\right|^2,
\end{aligned}
\label{discrepancy} 
\end{equation} 
where the coefficients $a_\alpha$, $b_\beta$ are chosen such that $\sum_{\alpha} a_\alpha x^{\alpha}$ and $\sum_{\beta} b_\beta x^{\beta}$ are harmonic polynomials and hence coincide with $cos$ and $sin$ functions on the unit circle.

We introduce the operator $\mathcal{K}_D$ given by
$$
\mathcal{K}_D[\phi](x) = \frac{1}{2 \pi} \int_{\partial D} \frac{\langle y-x, \nu_y \rangle}{|x-y|^2} \phi(y) \ d\sigma(y), \ \ \text{for} \ \phi \in L^2(\partial D).
$$
It is well known that the $L^2$-adjoint of $\mathcal{K}_D$ is 
$$
\mathcal{K}^*_D[\phi](x) = \frac{1}{2 \pi} \int_{\partial D} \frac{\langle x-y, \nu_x \rangle}{|x-y|^2} \phi(y) \ d\sigma(y), \ \ \text{for} \ \phi \in L^2(\partial D).
$$
Recall that, for $\eta$ much smaller than $\varepsilon$, 
\begin{equation*}
\begin{aligned}
& \sum_{\alpha,\beta} a_\alpha b_\beta M_{\alpha \beta}(D_\eta,k) -  \sum_{\alpha,\beta} a_\alpha b_\beta M_{\alpha \beta}(D,k) \approx \eta \int_{\partial D} h(x) \widehat{\phi}_{HF}(x) d\sigma(x),
\end{aligned} 
\end{equation*}
where
$$
\widehat{\phi}_{HF} = (k-1) \left[\left.\frac{\partial v}{\partial \nu}\right|_{-} \left.\frac{\partial u}{\partial \nu}\right|_{-} + \frac{1}{k}\left.\frac{\partial u}{\partial T}\right|_{-}\left.\frac{\partial v}{\partial T}\right|_{-}\right],
$$
and
$$
u= H(x) + \mathcal{S}_D\left[\left(\frac{k+1}{2(k-1)}\mathcal{I} - \mathcal{K}^*_D\right)^{-1} \left[\frac{\partial H}{\partial \nu}\right]\right](x),
$$
$$
v= F(x) +\mathcal{D}_D\left[\left(\frac{k+1}{2(k-1)}\mathcal{I} - \mathcal{K}_D\right)^{-1} \left[F\right]\right] (x),
$$
where $H=\sum_\alpha a_\alpha x^{\alpha}$ and $F=\sum_\beta b_\beta x^\beta$ are defined as above, see \cite{GPT,FDPTs}. Since
$$
\sum_{\alpha,\beta} a_\alpha b_\beta P_\rho [W_{\alpha \beta}](D,k) \to \frac{2 \sin(\rho t)}{t} \sum_{\alpha,\beta} a_\alpha b_\beta M_{\alpha \beta}(D,k) \hspace{3mm} \text{as } \varepsilon \to 0,
$$
where $M_{\alpha \beta}=M_{\alpha \beta}(D,k)$ are the (usual) high-order PTs (\cite{GPT, FDPTs}), we have the following approximation formula: 
\begin{equation}
\begin{aligned}
& \sum_{\alpha,\beta} a_\alpha b_\beta P_\rho [W_{\alpha \beta}](D_\eta,t) -  \sum_{\alpha,\beta} a_\alpha b_\beta P_\rho [W_{\alpha \beta}](D,t) \approx \eta \int_{\partial D_{\text{given}}} h(x) P_\rho[\phi_{HF}](x,t) d\sigma(x).
\end{aligned}
\label{approx-formula} 
\end{equation}
Note that
$$
P_\rho[\phi_{HF}](x,t) =  \frac{2 \sin (\rho t)}{t} \widehat{\phi}_{HF}(x).
$$
Therefore, we modify the initial shape $D^{\text{init}}$ to obtain $D^{\text{mod}}$ by the gradient descent method
$$
\partial D^{\text{mod}} = \partial D^{\text{init}} - \left(\frac{J^{(n)} [D^{\text{init}}]}{\sum_j (\langle d_S J^{(n)}[D^{\text{init}}], \psi_j \rangle)^2} \sum_j \langle d_S J^{(n)}[D^{\text{init}}], \psi_j \rangle \psi_j \right) \nu,
$$
where $\nu$ is the outward unit normal to $D^{\text{init}}$ and $\{\psi_j\}$ is a basis of $L^2(\partial D^{\text{init}})$. The shape derivative of $J^{(n)}[D]$ follows immediately from the approximation formula \eqref{approx-formula} and is given by
\begin{equation*} 
\begin{aligned} 
& \langle d_S J^{(n)}[D], h \rangle_{L^2(\partial D)}
\\
& = \sum_{1\leq |\alpha|+|\beta|\leq K} \left( \sum_{\alpha,\beta} a_\alpha b_\beta P_\rho [W_{\alpha \beta}](D,t) -  \sum_{\alpha,\beta} a_\alpha b_\beta P_\rho [W_{\alpha \beta}](B,t) \right) \left\langle P_\rho[\phi_{HF}],h \right\rangle_{L^2(\partial D)}. 
\end{aligned} 
\end{equation*}

As in \cite{GPT, FDPTs}, we can make the optimization procedure recursively by increasing $K$ to refine the reconstruction of the shape details of the inclusion. At each step, the initial guess for the shape is the result of the previous one. The equivalent ellipse in Section 7.2 provides a good initial guess to begin with. 

\subsection{Reconstruction of the TDPTs from multi-frequency MSR measurements}
We begin by recalling how to reconstruct the FDPTs from multi-static measurements. 

Let $D = \varepsilon B + z$ be a small acoustic two-dimensional inclusion of characteristic size $\varepsilon$, and contrast $k$.
Let us consider two arrays: an array of $M$ transmitters $\{y_1, ... , y_{M}\}$  and another of $N$ receivers $\{x_1, ... , x_{N}\}$, both distributed around the inclusion $D$. For a given frequency $\omega \in  [-\rho, \rho]$, let $\mathcal{A}_{\omega}$ be the corresponding $N \times M$ Multi-Static Response (MSR) matrix. Precisely, the $(i,j)$-th entry of $\mathcal{A}_{\omega}$ is given by
\begin{equation} \label{eq:msr} (\mathcal{A}_\omega)_{i,j} = v_{y_j}(x_i, \omega) - V_{y_j}(x_i, \omega), \quad i \in \{1, ... , N\}, \; j \in \{1, ... , M\}, \end{equation}
that is, the scattered field recorded at the receiver $x_i$, due to the transmitter $y_j$.

As usual, in order to model the error in the measurements, additive gaussian white noise $\mathcal{X}_{\text{noise}}$ is used to contaminate $\mathcal{A}$. We suppose that $\mathcal{X}_{\text{noise}} = \sigma_\text{noise} \mathcal{X}_0$, where $\sigma_\text{noise}$ and $\mathcal{X}_0$ is an $N \times M$ complex random matrix with independent and identically distributed $\mathcal{N}(0,1)$ entries. Hence, the entries of $\mathcal{X}_{\text{noise}}$ are independent complex Gaussian random variables with mean zero and variance $\sigma^2_\text{noise}$.

In view of formula \eqref{AE:63}, each entry of the MSR matrix admits the following expansion
\begin{equation*}
(\mathcal{A}_{\omega})_{i,j} = \mathcal{G}_\omega (x_i,z) \widehat{\mathcal{W}}(\omega) \mathcal{G}_\omega (y_j,z)^T + O(\varepsilon^{n+2}  (|\ln \omega| + \omega^{n+1})),
\end{equation*}
where 
$$
\mathcal{G}_\omega (y,z) = \left ( \frac{1}{\alpha!} \partial^\alpha_z \Gamma_\omega (y,z)\right )_{|\alpha|\le n},
$$
$$
\widehat{\mathcal{W}} = (\widehat{\mathcal{W}}_{\alpha \beta})_{|\alpha|+|\beta|\leq n}= \left(\varepsilon ^{|\alpha|+|\beta|} \widehat{W}_{\alpha \beta}\right)_{|\alpha|+|\beta|\leq n}.
$$ 
Then, the tensor $\widehat{\mathcal{W}}$  can be reconstructed from the measurements $\mathcal{A}_{\omega}$ as the least-squares solution to the following problem
\begin{equation}
\widehat{\mathcal{W}}(\omega)^{\text{meas}} \leftarrow \arg \min_{\mathcal{W}} \|\mathcal{G}_\omega (\,: \,,z) \widehat{\mathcal{W}}(\omega) \mathcal{G}_\omega (\,:\,,z)^T - \mathcal{A}_{\omega} \|_F ,
\label{lsq} 
\end{equation}
where $\|\cdot\|_F$ denotes the Frobenius norm of a matrix, see \cite{FDPTs}. 

Now, by using the reconstructed FDPTs at multiple frequencies in a discrete subset of the interval $[-\rho,\rho]$ we can get an approximation for the TDPTs \eqref{2TDPTs}. More precisely, let the set of sampled frequencies $S_L$ be a uniform discretization of the interval $[-\rho, \rho]$, i.e., 
\[ - \rho = \omega_{-L} < \omega_{-L+1} <  ...  < \omega_{-1} < 0 < \omega_{1} < ... < \omega_{L-1} < \omega_L = \rho , \]
with $\omega_{l+1} - \omega_l = \rho/L$ for every $|l|\le L$.  Then the estimator built on this sampling set of frequencies is obtained by applying the discrete Fourier transform (DFT) 
\begin{equation}
P_{\rho} [\mathcal{W}_{\alpha \beta}](t)^{\text{meas}} := \frac{\rho}{L} \sum\limits_{l = - L}^{L} e^{-i \omega_l t} \widehat{\mathcal{W}}_{\alpha \beta}(\omega_l)^{\text{meas}}\;.
\end{equation}
Such estimator is unbiased, with variance
\begin{equation} \label{eq:variance}
Var(P_{\rho} [\mathcal{W}_{\alpha \beta}]^{\text{meas}}) = \frac{\rho^2}{L^2} \sum\limits_{l = - L}^{L} Var(\widehat{\mathcal{W}}_{\alpha \beta}(\omega_l)^{\text{meas}})\;.
\end{equation}
Since the reminder stated in \eqref{AE:63} is singular at $\omega = 0$, caution is needed when dealing with small frequencies.
In order to get the asymptotic behavior of this dispersion term \eqref{eq:variance} as $L \to +\infty$  we should slightly modify the choice of the range of frequencies by casting a neighborhood of $\omega = 0$ away. In particular for some small $\rho_0 > 0$ we require that $S_L \cap [-\rho_0, \rho_0] = \emptyset$, $S_L$ being uniformly distributed in $[-\rho, -\rho_0]$ and $[\rho_0, \rho]$, separately. Hence $ Var(\widehat{\mathcal{W}}_{\alpha \beta}(\omega_l)^{\text{meas}}) \le C(\rho_0)$ for all $\omega \in S_L$. Then it is readily seen that $Var(P_{\rho} [\mathcal{W}_{\alpha \beta}]^{\text{meas}}) \to 0$ as $L \to +\infty$.

Note that this reconstruction presented here is indirect in the sense that we don't extract the TDPTs directly from the temporal data. Instead, the estimation is done by aggregating the results of multiple reconstructions in the frequency-domain.

\vspace{1mm}

\section{Numerical illustrations} 

In this section, we present some numerical simulations to corroborate the theoretical results of our paper. The simulations aim at showing that the new concept of TDPT can be successfully employed for imaging a small acoustic inclusion. 

In what follows, all the experiments are carried out in the two-dimensional case. First, we perform an analysis of the computational accuracy of $P_\rho [\mathcal{W}]^{\text{meas}}$, which is reconstructed from MSR measurements using the method proposed in Section 7.3. Then we test the optimization procedure in Section 7.2 to restore the fine shape details of the inclusion.

\subsection{Reconstruction of the TDPTs}

We present computational results regarding the reconstruction of TPDTs from MSR measurements by solving \eqref{lsq}. The comparison between $P_\rho[\mathcal{W}]^{\text{meas}}$, obtained from measured $\widehat{\mathcal{W}}$, with $P_\rho[\mathcal{W}]$, numerically computed by solving \eqref{AE:16bis} and computing \eqref{2TDPTs}. For the latter, boundary elements techniques are used in the exact evaluation of $\widehat{\mathcal{W}}$.

Let $D_1$ and $D_2$ be two small acoustic inclusions of same characteristic size $\varepsilon = 0.05$ and contrast $k = 3$, centered at $z_1 = [0.3,-0.1]$ and $z_2 = [0,0.25]$, as shown in Figure \ref{fig:Dsetting}. 

We consider coincident arrays of transmitters and receivers to acquire the multi-static data \eqref{eq:msr}. In particular, circular  and square  configurations are tested in the reconstruction of $\widehat{\mathcal{W}}$ for $D_1$ (Figure \ref{fig:settingcircle}) and $D_2$ (Figure \ref{fig:settingsquare}), respectively.

\begin{figure}[h!]
	\centering
	\hspace*{5mm}
	\begin{subfigure}[b]{0.46\textwidth}
		\centering
		\includegraphics[width=\textwidth]{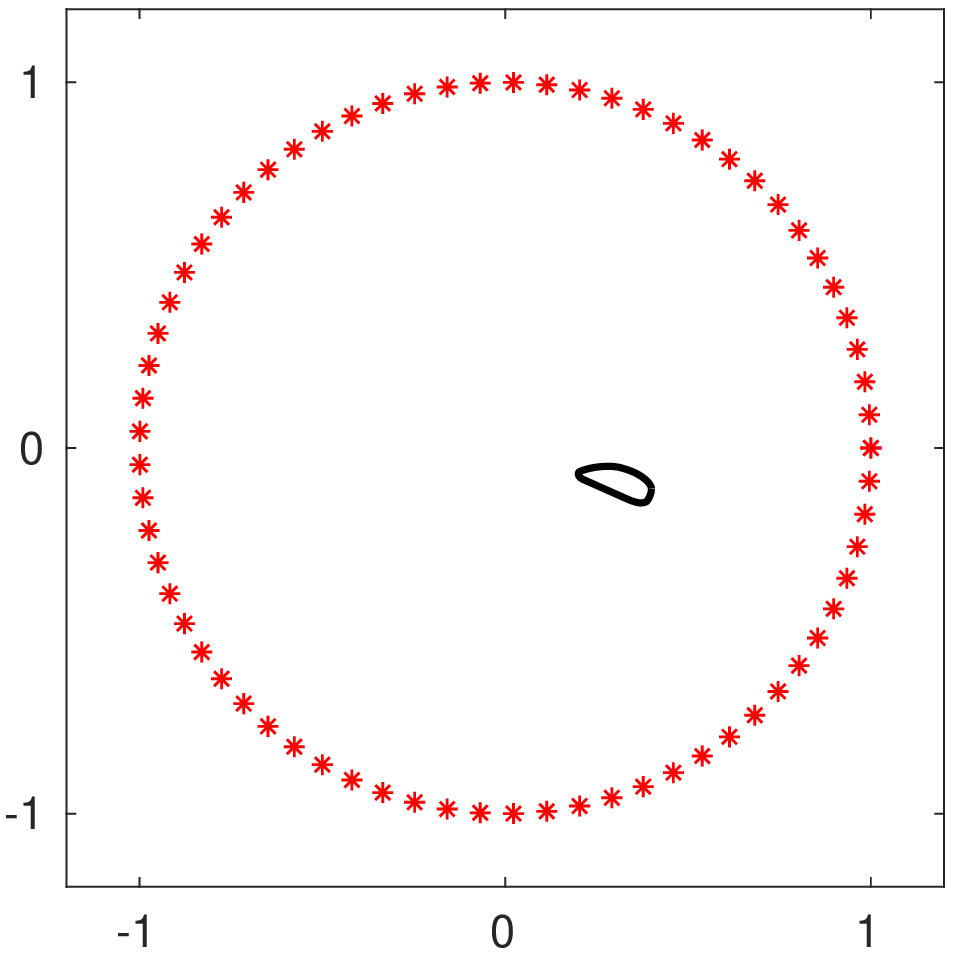}
		\caption{$N_1 = 70$ transmitters/receivers on the unit circle, surrounding the inclusion $D_1 = \varepsilon B_1 + z_1$.}
		\label{fig:settingcircle}
	\end{subfigure}
	\hfill
	\begin{subfigure}[b]{0.47\textwidth}
		\centering
		\includegraphics[width=\textwidth]{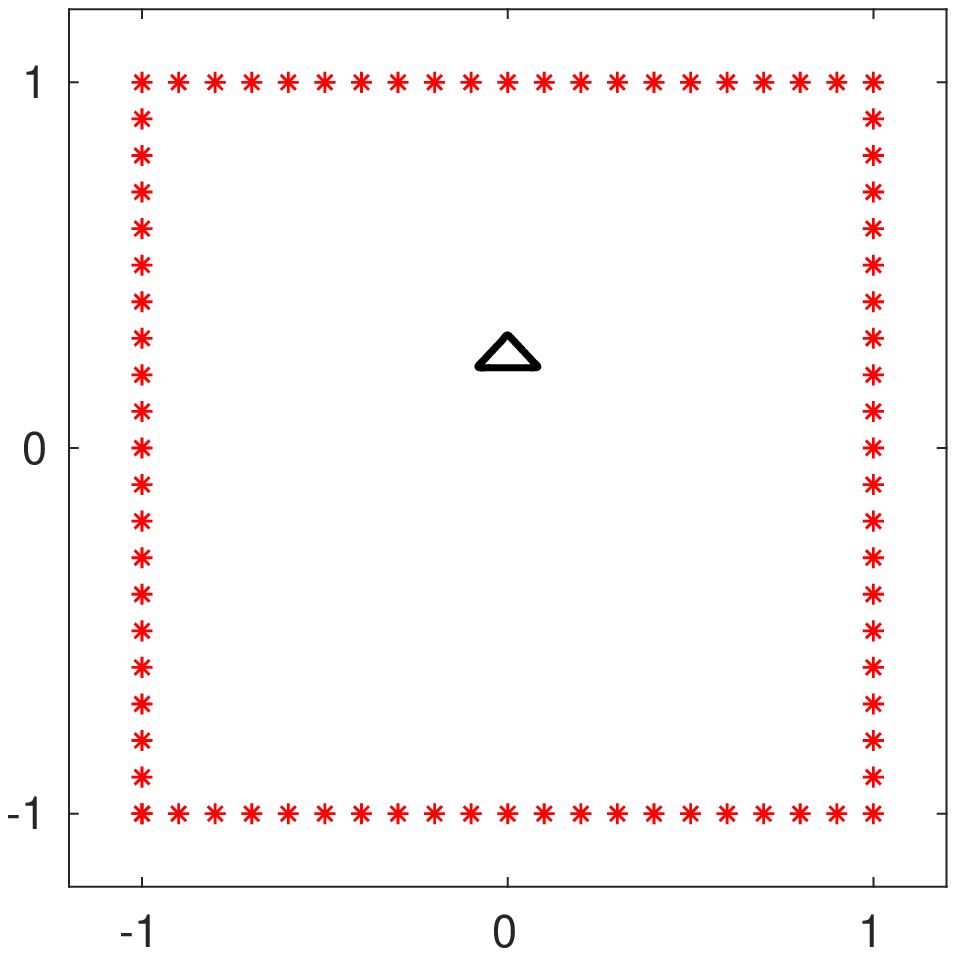}
		\caption{$N_2 = 80$ transmitters/receivers on the unit square, surrounding the inclusion $D_2 = \varepsilon B_2 + z_2$.}
		\label{fig:settingsquare}
	\end{subfigure}
	
	\caption{Geometries of the acquisition setting. The black curves correspond to two small inclusions with common size $\varepsilon = 0.05$ and contrast $k = 3$.}
	\label{fig:Dsetting}
\end{figure}

\begin{figure}[H]
	\centering
	\hspace*{-5mm}
	\begin{tabular}{rl}		
		\includegraphics[scale=0.55]{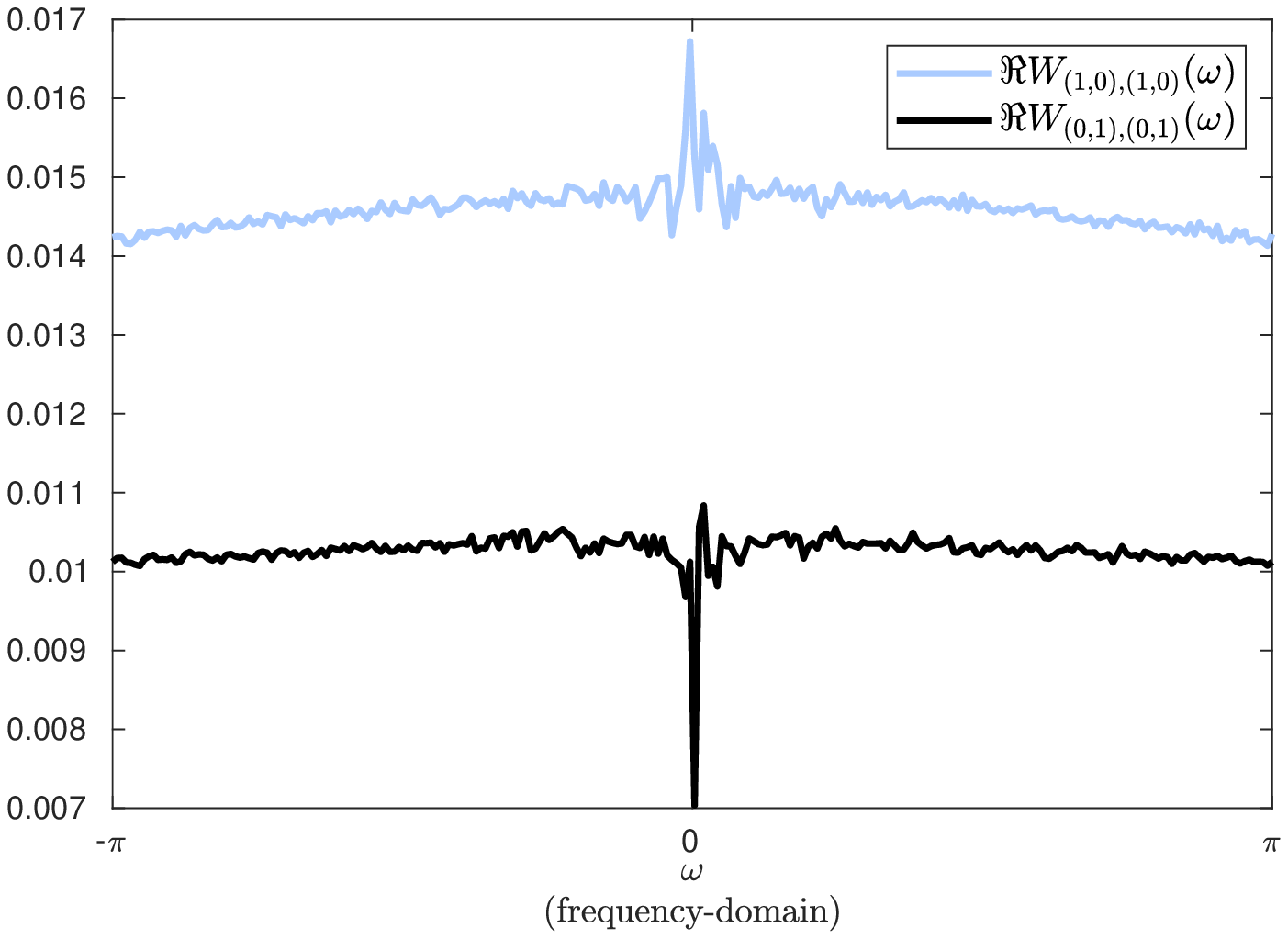}	&	\includegraphics[scale=0.55]{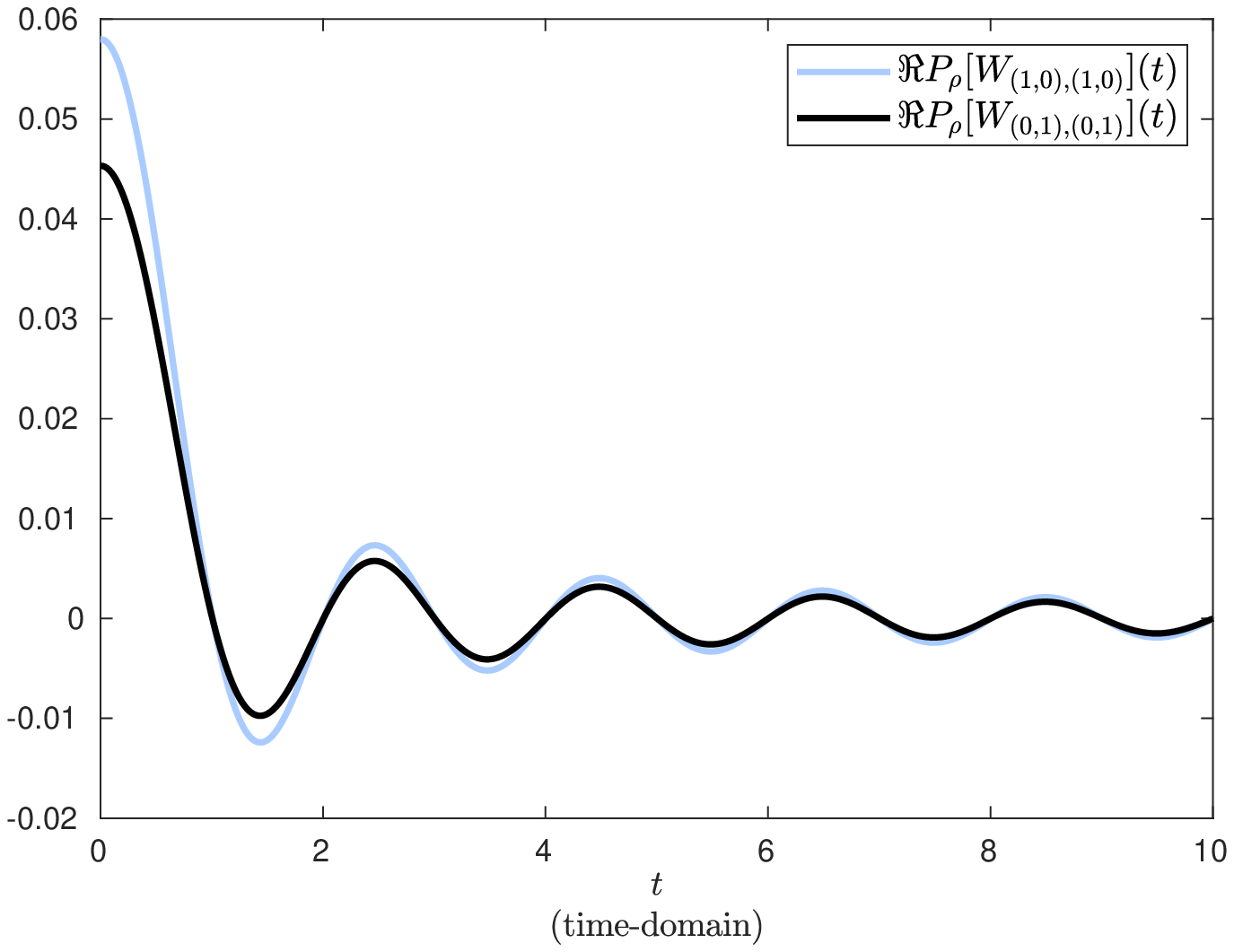} \\
	\end{tabular}
	\caption{The diagonal elements of the reconstructed first order FDPT, namely $\widehat{\mathcal{W}}_{\alpha \beta}$ with $|\alpha| = |\beta| = 1$ (on the left), and the corresponding first order TDPT, namely $P_\rho[\mathcal{W}_{\alpha \beta}]$, over the interval $[0,5]$ (on the right). 20\% of noise is considered in the reconstruction of the FDPT.}
	\label{fig:noisysignals}
\end{figure}

The reconstruction of the first order TDPT of $D_1$ with $20\%$ of noise is reported in Figure \ref{fig:noisysignals}. A uniform sampling of $2^8$ frequencies within $[-\rho, \rho] = [-\pi, \pi]$ is used, and the resulting $P_\rho[\mathcal{W}_{\alpha \beta}]^{\text{meas}}$ is plotted over the interval $[0,5]$. With this choice of $\rho$, the condition on the truncating threshold in Definition \ref{def:TDPTs} is satisfied, see Remark \ref{rem:threshold-condition}.

An analysis of the error in the noiseless reconstruction is performed. Since the TDPTs are functions, the $L^2$-norm is  adequate. We define the absolute and relative-error as follows:
\begin{equation*}
\text{absErr}(T) = \| P_\rho[\mathcal{W}_{\alpha \beta}]^{\text{meas}} - P_\rho[\mathcal{W}_{\alpha \beta}] \|_{L^2(0,T)},
\end{equation*} 
\begin{equation*}
\text{relErr}(T) = \frac{\| P_\rho[\mathcal{W}_{\alpha \beta}]^{\text{meas}} - P_\rho[\mathcal{W}_{\alpha \beta}] \|_{L^2(0,T)}}{\| P_\rho[\mathcal{W}_{\alpha \beta}] \|_{L^2(0,T)}}.
\end{equation*} 
The results are shown in Figure \ref{fig:L2errors}.

\begin{figure}[H]
	\centering
	\hspace*{-5mm}
	\begin{tabular}{cc}
		\includegraphics[scale=0.5]{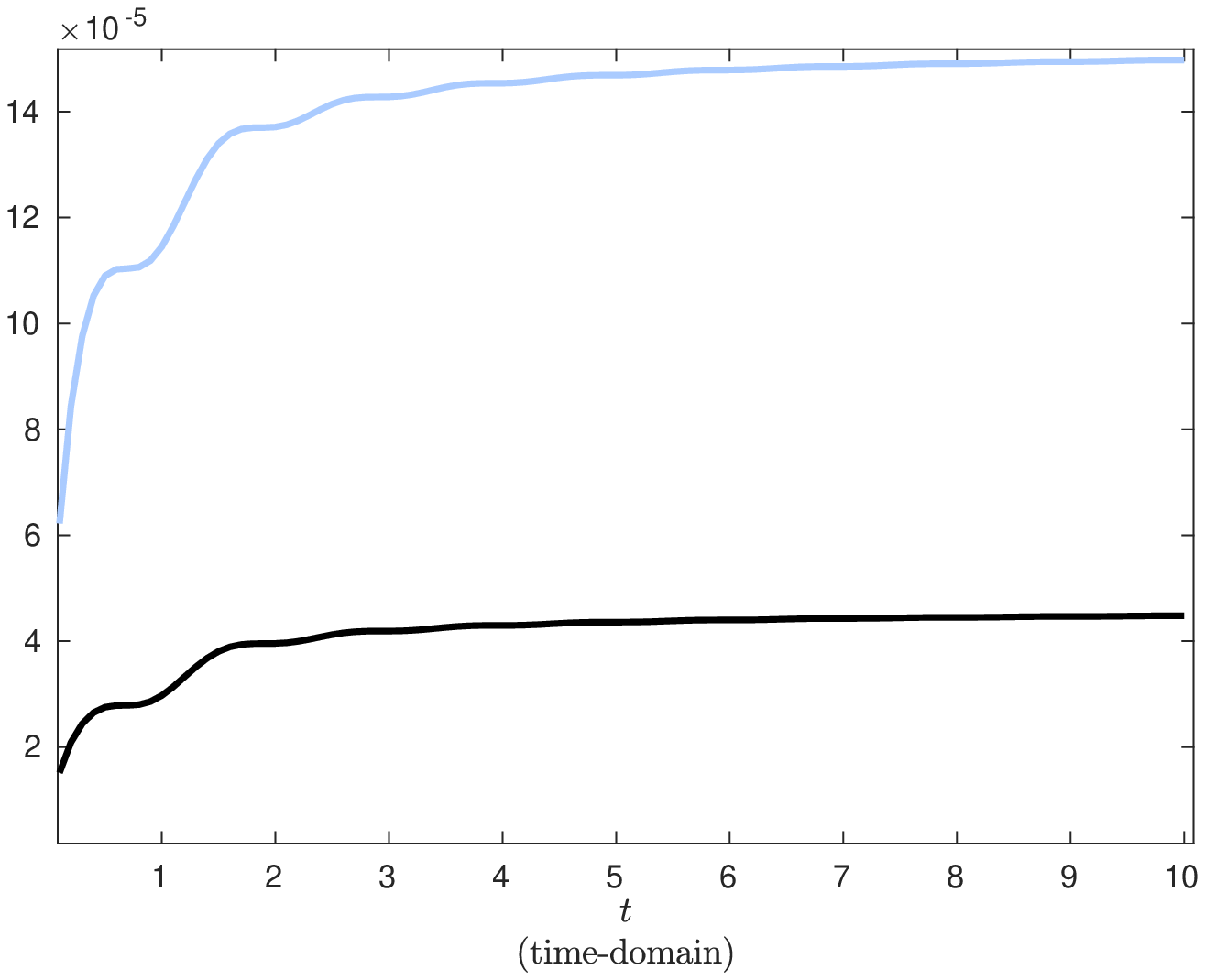} & 
		\includegraphics[scale=0.51]{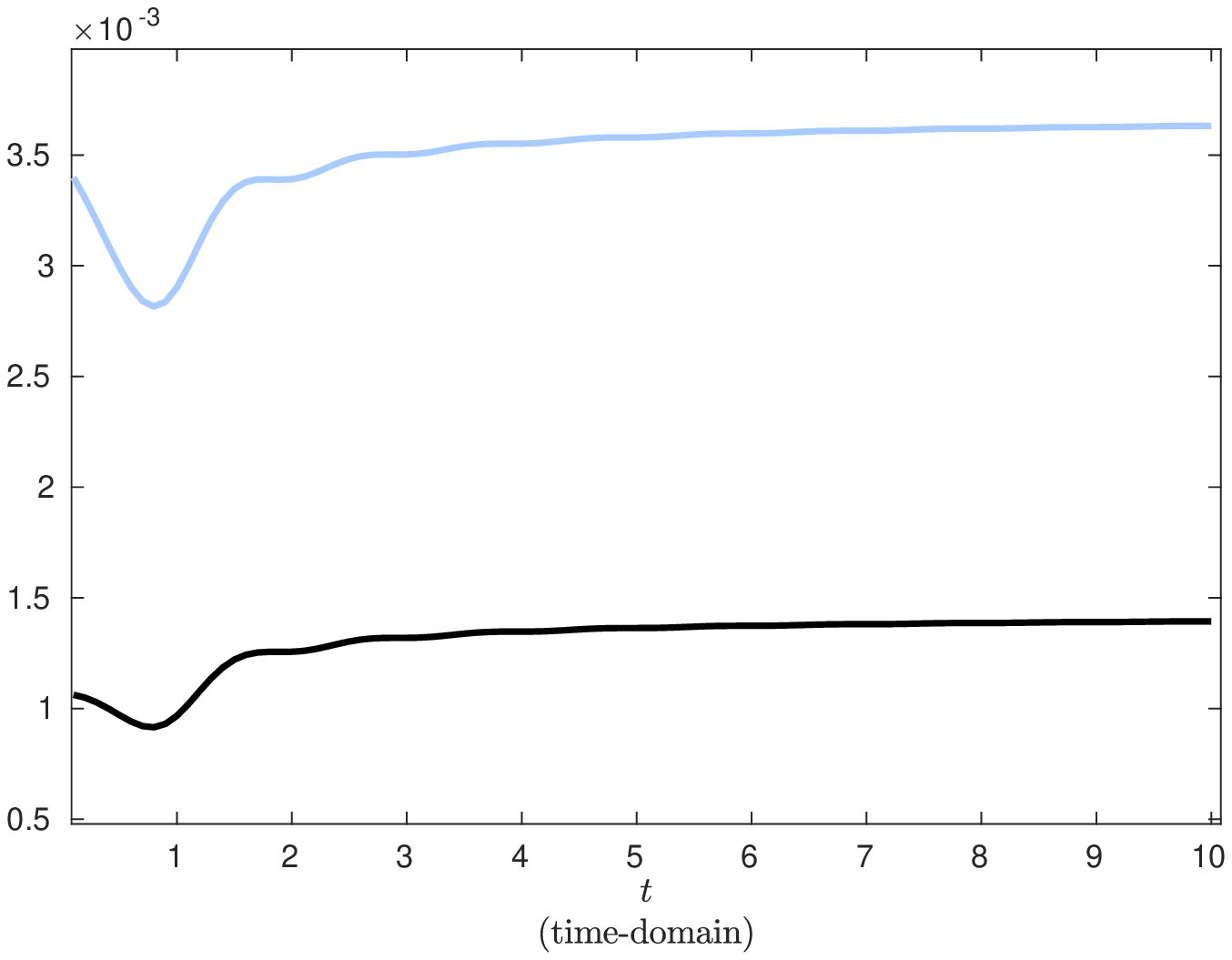}
	\end{tabular}
	\caption{$L^2$ -absolute (left) and -relative (right)  errors in the noiseless reconstruction of $P_\rho [\mathcal{W}_{(1,0),(1,0)}]$ (pale blue) and $P_\rho [\mathcal{W}_{(0,1),(0,1)}]$ (black), assuming the setting shown in Figure \ref{fig:settingsquare}.}
	\label{fig:L2errors}
\end{figure}

\subsection{Reconstruction of the fine shape details}

In this section, we set $k$ to be the value found by the method proposed in Section 7.1, and use the equivalent ellipse as an initial guess for the optimization procedure in Section 7.2.

Firstly, to simulate the reconstruction of the fine details of the inclusions $D_1$ and $D_2$, we reconstruct the TDPTs up to order $n = 4$ by using measurements coming from the two different acquisition settings as in Figure \ref{fig:Dsetting}. A uniform sampling of $2^6$ frequencies within the range $[-\rho, \rho] = [-\pi/8,\pi/8]$ is used to this aim. This is done as described in Section 7.3. After obtaining $P_\rho[\mathcal{W}_{\alpha \beta}]^{\text{meas}}$, we feed them to the optimization algorithm of Section \ref{sec:optimization}. At each step, the algorithm recursively minimizes the discrepancy function \eqref{discrepancy}, yielding a progressive update of the shape. The equivalent ellipse is taken as initial guess. 

The results after few iterations with $20\%$ of noise are shown in Figure \ref{fig:Drec20}. We can observe that details finer than the equivalent ellipse are well recovered for both the inclusions despite the fact that noisy measurements are used to reconstruct the  TDPTs.


\begin{figure}[H]
	\centering
	\hspace*{-5mm}
	\begin{tabular}{ccc}
		\includegraphics[scale=0.41]{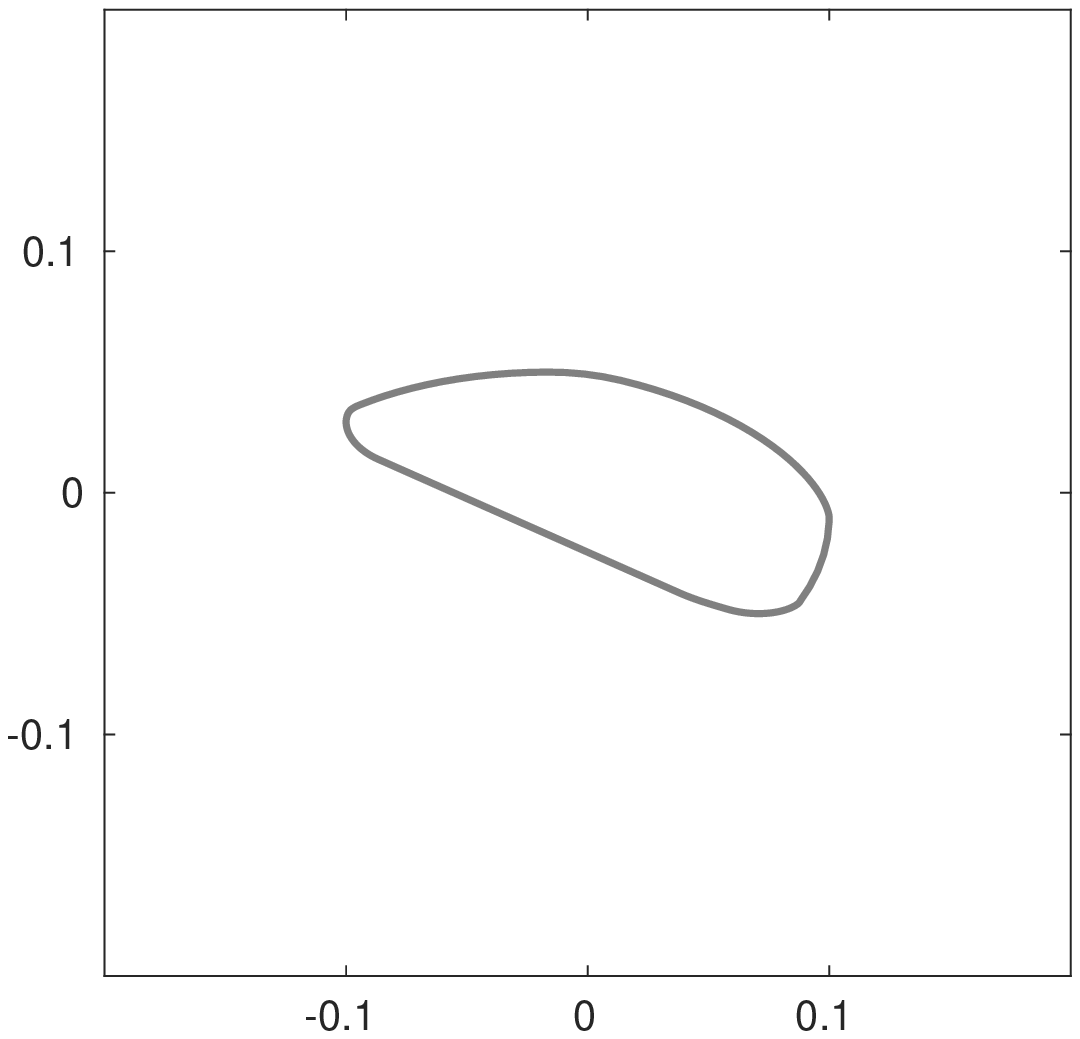} & \includegraphics[scale=0.4]{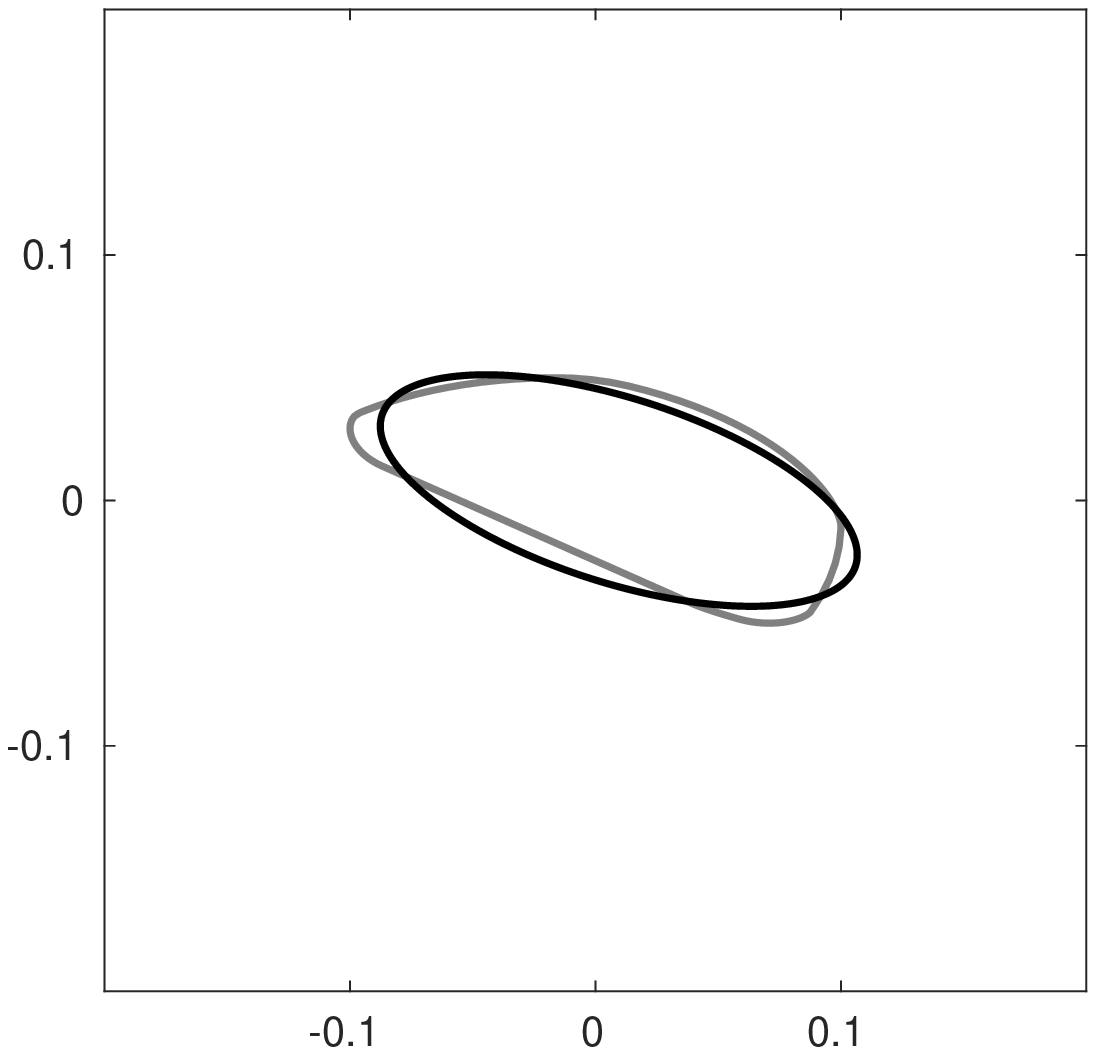} & \includegraphics[scale=0.41]{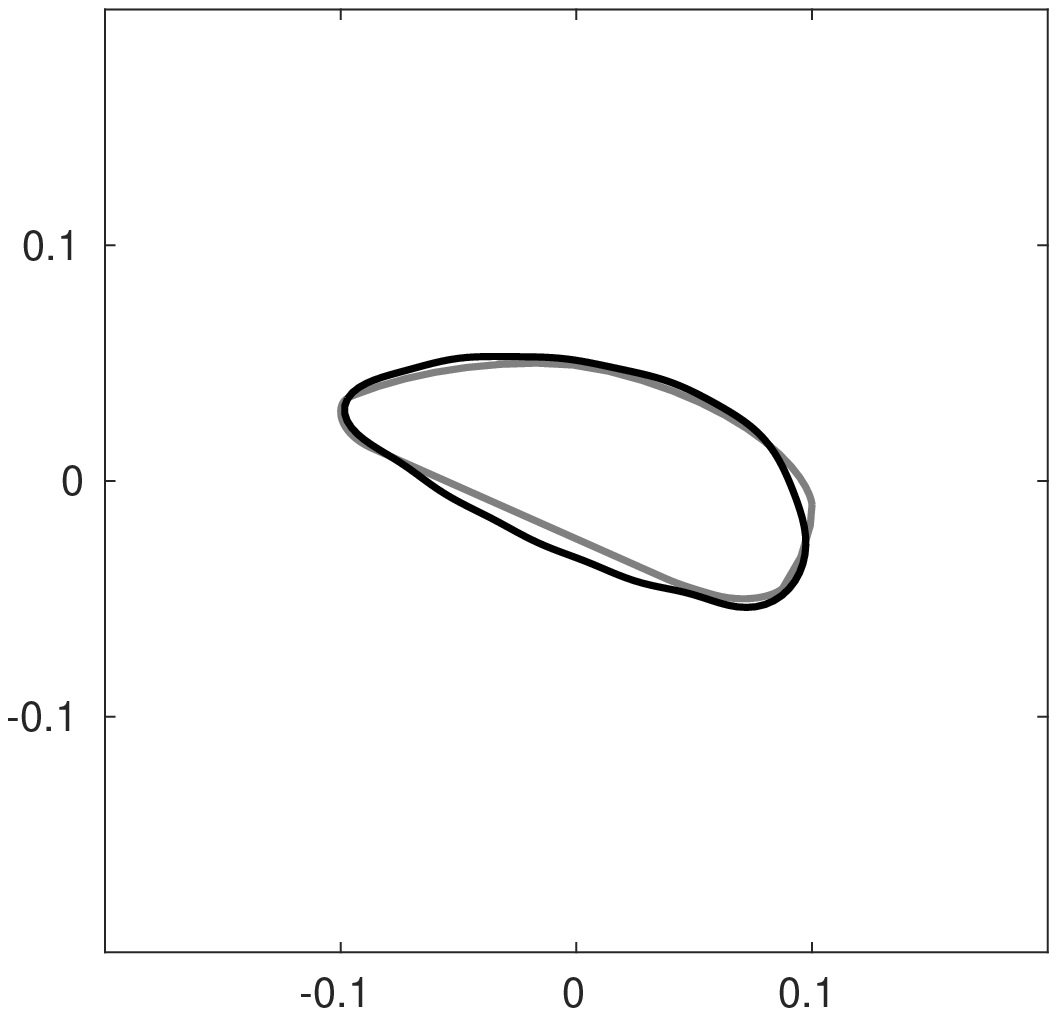} \\
		\includegraphics[scale=0.4]{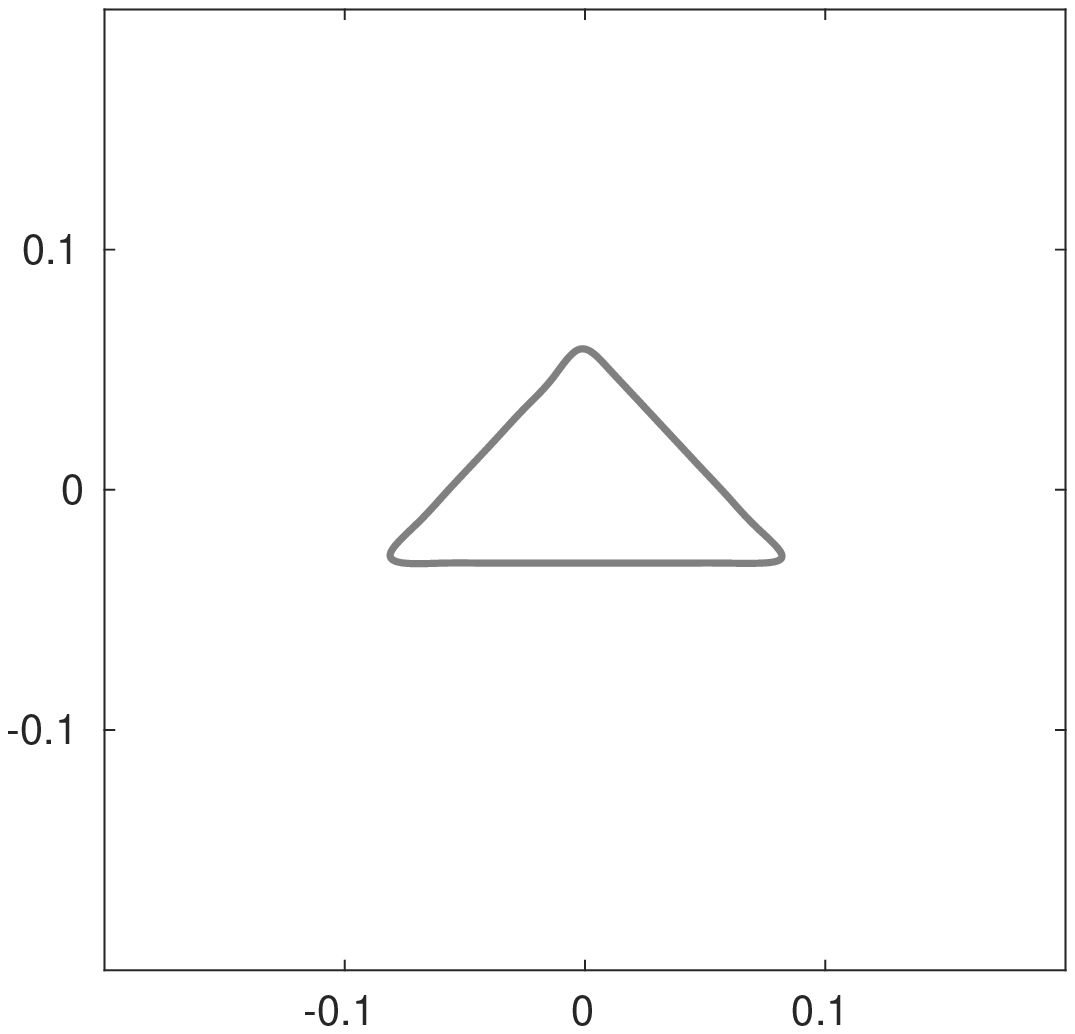} & \includegraphics[scale=0.4]{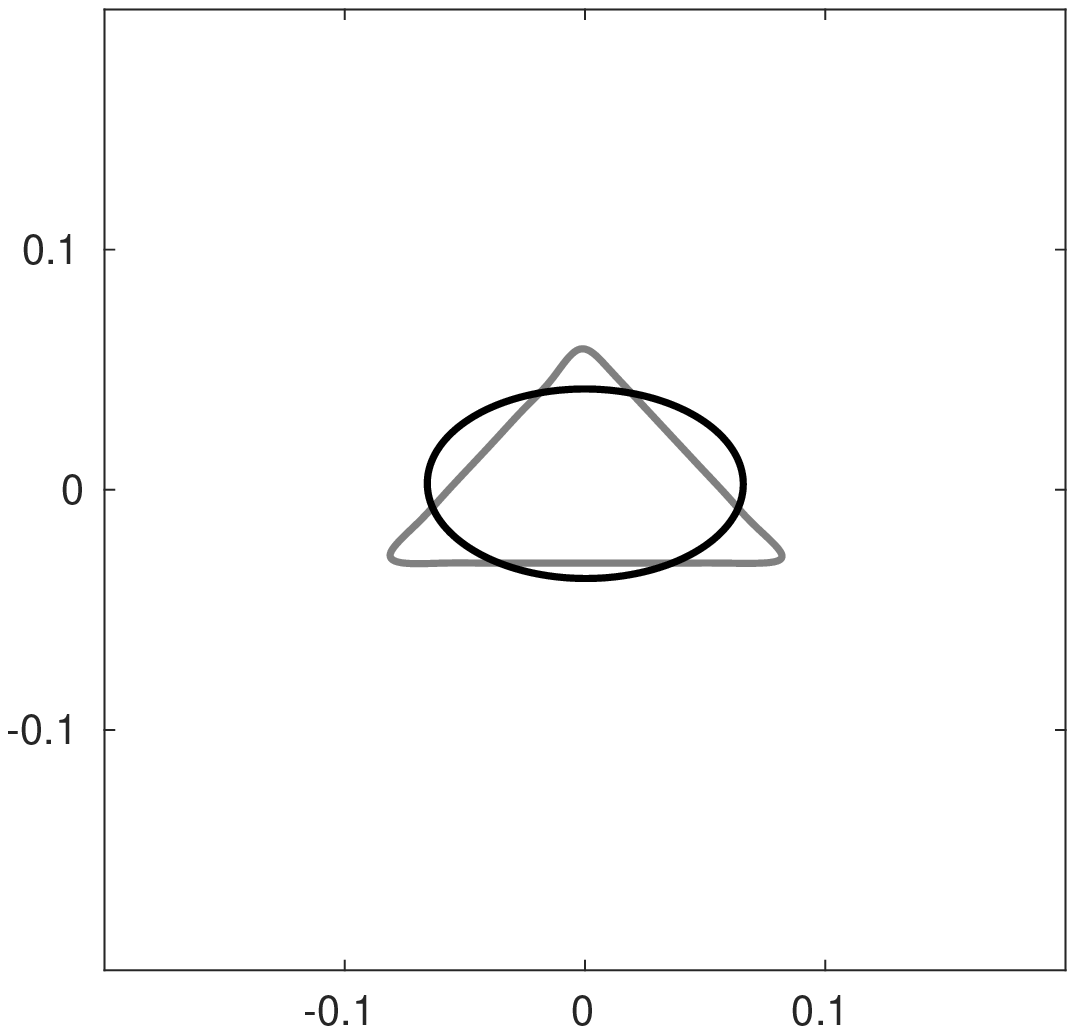} & \includegraphics[scale=0.41]{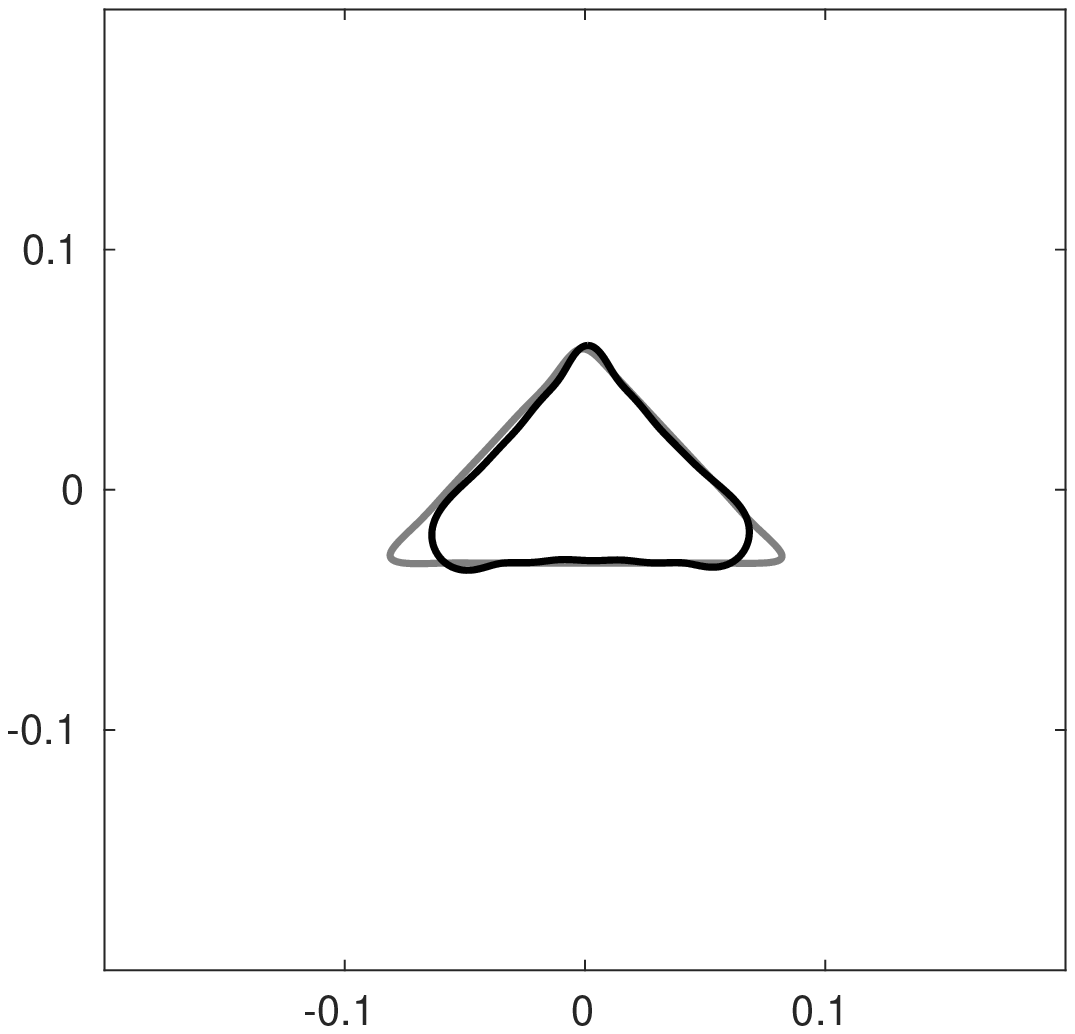} \\
	\end{tabular}
	\caption{Gray curve is the actual inclusion (left column) whereas the black curve represents the equivalent ellipse (mid column) and the reconstructed shape with 20\% of noise after few iterations (right column). }
	\label{fig:Drec20}
\end{figure}

\section{Concluding remarks} 
In this paper, we have presented for the first time the new concept of the TDPTs for the transient problem. These objects are the truncated Fourier transforms of the FDPTs introduced by {Ammari et al.} in \cite{FDPTs}. We have shown that by operating with a range of frequencies, we can recover the high-order TDPTs from the measurements, and this yields a robust reconstruction of the fine shape details of the small acoustic inclusion. 
For future purposes, it is expected that the TDPTs will be relevant to develop promising time-domain techniques for target classification in echolocation by extending the correspondent frequency-domain methods \cite{echolocation, target classification} and the electro-sensing case \cite{fish, electrosensing time, fish-inhom}. A worthwhile extension would be that of considering the case of a bat which uses the movement to better map the surrounding environment.
	
\vspace{1mm}	

\section*{Acknowledgments}
The authors gratefully acknowledge Prof. H. Ammari for his guidance. During the preparation of this work, the authors were financially supported by a Swiss National Science Foundation grant (number 200021-172483).  

\vspace{1mm}

\appendix

\section{The results in the three-dimensional case}\label{appendix1}

For the sake of completeness, in this appendix we focus on the three-dimensional case ($d = 3$). In doing so, we note that the derivation of the asymptotic expansion in time-domain can be carried out similarly to the two-dimensional case (see Sections 4, 5 and 6). Only minor changes in the proofs are required.

In the sections below, we aim at stressing the differences from the proofs which are given for the two-dimensional case.

\subsection{Stability estimates for the Helmholtz equation}

Let $D = \varepsilon B +z$, $|B|=1$ and $D \subset \mathbb{R}^3$. Let $\omega$, $0<\gamma <1$, and $\varepsilon_0 >0$ be as in Section \ref{sec:stability-estimates-2d}. As in the two dimensional case, the main estimate is given by Proposition \ref{t1}. However, while the proof that we presented for the two-dimensional case resort to Lemma \ref{t2}, in the
three-dimensional case it can be proved more directly. We sketch the proof below, highlighting the changes with respect to the previous argument.

\begin{proof} The skeleton of the proof is the same as for the two-dimensional case. The operator $T$ is introduced as in \eqref{eq:T-operator}, and it is decomposed as follows	
	\begin{equation*}
	T=\widetilde{T}_0+\widetilde{T}_{\varepsilon}, 
	\label{5-3} 
	\end{equation*}
	where, this time, $\widetilde{T}_0$ is defined as
	\begin{equation*}
	\widetilde{T}_0(\widetilde{\phi},\widetilde{\psi}) := \left(\mathcal{S}_B^{0} [\widetilde{\phi}] - \mathcal{S}_B^{0}[\widetilde{\psi}], k \frac{\partial \mathcal{S}_B^{0} [\widetilde{\phi}]}{\partial \nu} \biggr |_{-} - 
	\frac{\partial \mathcal{S}_B^{0}[\widetilde{\psi}]}{\partial \nu} \biggr |_{+}\right),
	\label{6-3} 
	\end{equation*}
	and 
	\begin{equation*}
	\widetilde{T}_{\varepsilon}:=T-\widetilde{T}_0.
	\label{7-3}  
	\end{equation*}
	Then, the argument proceeds as in the two-dimensional case.
\end{proof}

\subsection{Frequency-dependent asymptotic expansion}

By using the same techniques of Section \ref{sec:freq-domain-ae}, the frequency-dependent asymptotic expansion in $\mathbb{R}^3$ is readily obtained.

\begin{theo}\label{theo1}
	Suppose that $\omega^2$ is not a Dirichlet eigenvalue for $-\Delta$ on $D$ and $\omega \in (0, \varepsilon^{-\gamma})$, with $0<\gamma <1$. The following asymptotic expansion holds:
	\begin{equation*}
	v_y(x, \omega) - V_y(x, \omega) = \sum\limits_{|\beta|=0}^{n+1} \sum\limits_{|\alpha|=0}^{n-|\beta|+1} \frac{\varepsilon^{|\alpha|+|\beta|+1}}{\alpha! \; \beta!} \partial_z^\alpha V_y (z, \omega) \partial_z^\beta \Gamma_{\omega} (x,z) \widehat{W}_{\alpha \beta} + O(\varepsilon^{n+3} (1+\omega^{n+2})), 
	\label{AE:25}
	\end{equation*}
	for $x \in \mathbb{R}^3 \setminus \overline{D}$, where $\widehat{W}_{\alpha \beta}$ are the FDPTs defined as in \eqref{eq:What}.
\end{theo}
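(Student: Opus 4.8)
The plan is to replicate the argument of Section \ref{sec:freq-domain-ae} almost verbatim, substituting the three-dimensional fundamental solution $\Gamma_\omega(x) = e^{i\omega|x|}/(4\pi|x|)$ for its planar counterpart and keeping careful track of how the powers of $\varepsilon$ and $\omega$ change. First I would introduce the truncated Taylor polynomial $V_{y,n+1}$ of the background field and let $(\phi_n,\psi_n)$ solve system \eqref{AE:54}, so that $(\phi-\phi_n,\psi-\psi_n)$ solves the same system with right-hand side $(V_y - V_{y,n+1},\,\partial_\nu(V_y - V_{y,n+1}))$. Since the stability estimate of Proposition \ref{t1} has already been shown to hold in $\mathbb{R}^3$ (with $\widetilde T_0$ in place of $T_0$, established above), I can bound $\|\phi-\phi_n\|_{L^2(\partial D)} + \|\psi-\psi_n\|_{L^2(\partial D)}$ by $C\bigl(\varepsilon^{-1}\|V_y - V_{y,n+1}\|_{H^1(\partial D)} + \|\nabla(V_y-V_{y,n+1})\|_{L^2(\partial D)}\bigr)$ and then control this by the Taylor remainder of $V_y$ on the small surface $\partial D$.

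The decisive dimensional difference appears in the scaling of the single-layer potential evaluated at the far-field point $x$. Writing $s = \varepsilon\tilde s + z$ and using $d\sigma(s) = \varepsilon^{d-1}\,d\sigma(\tilde s)$ with $d=3$ gives
\begin{equation*}
\mathcal{S}_D^\omega[\psi_\alpha(\varepsilon^{-1}(\cdot - z))](x) = \varepsilon^2\int_{\partial B}\Gamma_\omega(x,\varepsilon\tilde s + z)\,\psi_\alpha(\tilde s)\,d\sigma(\tilde s),
\end{equation*}
where the prefactor is now $\varepsilon^2$ rather than the $\varepsilon$ of the planar case. Taylor expanding $\Gamma_\omega(x,\varepsilon\tilde s+z)$ about $z$ to order $n+1$ and invoking the representation \eqref{AE:18bis} of $\psi_n$ — which carries over unchanged, since the scaled boundary system retains the same form in both dimensions — I obtain
\begin{equation*}
\mathcal{S}_D^\omega[\psi_\alpha(\varepsilon^{-1}(\cdot - z))](x) = \sum_{|\beta|=0}^{n+1}\frac{\varepsilon^{|\beta|+2}}{\beta!}\,\partial_z^\beta\Gamma_\omega(x,z)\,\widehat W_{\alpha\beta} + O(\cdots),
\end{equation*}
with $\widehat W_{\alpha\beta}$ as in \eqref{eq:What}. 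Combining the factor $\varepsilon^{|\beta|+2}$ with the prefactor $\varepsilon^{|\alpha|-1}$ coming from \eqref{AE:18bis} produces exactly the exponent $\varepsilon^{|\alpha|+|\beta|+1}$ of the claimed expansion — one power of $\varepsilon$ higher than in Theorem \ref{theo3}.

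The remaining work is the bookkeeping of the remainder, which I expect to be the main obstacle, since this is where the dimension enters most delicately. The simplification in $\mathbb{R}^3$ is that, for $x$ with $\operatorname{dist}(x,D)\geq c_1>0$, the kernel $\Gamma_\omega(x,\cdot)$ is smooth and bounded on $\partial D$ with no logarithmic singularity, so the $|\ln\omega|$ factor of Theorem \ref{theo3} disappears; instead, differentiating $e^{i\omega|x-z|}/|x-z|$ up to order $n+2$ yields $\|\Gamma_\omega(x,\cdot)\|_{C^{n+2}(\overline D)} \leq C(1+\omega^{n+2})$, the power $\omega^{n+2}$ replacing the half-integer power $\omega^{n+3/2}$ coming from the Hankel asymptotics. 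Feeding this bound, the three-dimensional layer-potential estimate of Lemma \ref{t0}(b), and the extra factor of $\varepsilon$ from the $\varepsilon^{d-1}$ scaling into both the estimate for $\mathcal{S}_D^\omega[\psi-\psi_n](x)$ and the two Taylor truncation errors, I would collect the dominant contribution as $O(\varepsilon^{n+3}(1+\omega^{n+2}))$. The delicate point is to verify that every intermediate constant is independent of $\omega$ across the admissible range $\omega\in(0,\varepsilon^{-\gamma})$, so that the $\varepsilon$- and $\omega$-powers combine exactly as stated; once this is confirmed, summing over $|\alpha|$ and $|\beta|$ with $|\alpha|+|\beta|\le n+1$ together with the representation formula \eqref{AE:51} yields the asserted expansion.
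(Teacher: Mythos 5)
Your proposal is correct and takes essentially the same approach as the paper: the paper's own proof of Theorem \ref{theo1} consists precisely of the remark that the techniques of Sections \ref{sec:stability-estimates-2d} and \ref{sec:freq-domain-ae} carry over to $d=3$ once the stability estimate of Proposition \ref{t1} is re-established via the decomposition $T=\widetilde{T}_0+\widetilde{T}_{\varepsilon}$, which is exactly the argument you reproduce. Your dimensional bookkeeping --- the $\varepsilon^{d-1}$ surface-measure scaling yielding the exponent $\varepsilon^{|\alpha|+|\beta|+1}$, the absence of the logarithmic singularity of the kernel, and the bound $C(1+\omega^{n+2})$ replacing $C(1+\omega^{n+3/2})$ --- correctly fills in the details the paper leaves implicit.
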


It is worth noticing that the leading-order term of the scattered field derived in \cite{Transient, Numerical Transient} can be recovered from Theorem \ref{theo1}. In particular, we have
\begin{equation*}
v_y(x, \omega) - V_y(x, \omega) = \varepsilon^3 \nabla_z V_y (z, \omega) M(k,B) \nabla_z \Gamma_{\omega} (x,z) + O(\varepsilon^{4}\omega^{3}), 
\label{AE:27}
\end{equation*}
where $M(k,B) = (m_{ij})^3_{i,j}$ is the polarization tensor (PT) given by 
$$
m_{ij} = \int_{\partial B}\xi_j \left(\frac{(k+1)}{2(k-1)}\mathcal{I} - \mathcal{K}^*_B\right)^{-1} [\nu_i](\xi) \; d\sigma(\xi), 
$$  
 $\nu=(\nu_1,\nu_2,\nu_3)$ is the outward unit normal to $\partial B$, $\xi=(\xi_1,\xi_2,\xi_3)$, and $k$ is the contrast. 
\vspace{1mm}

\subsection{Time-domain asymptotic expansion}
In the three-dimensional case, the emitted wave generated at $y \in \mathbb{R}^3\setminus \overline{D}$ is defined as 
$$U_{y}(x,t) := \frac{\delta(t-|x-y|)}{4 \pi |x-y|},$$
where $\delta$ is the Dirac mass at $0$. It is readily seen that $U_y$ satisfies
\begin{equation*}
\begin{cases}
(\partial_t^2 - \Delta ) U_y(x,t) = \delta_{x=y} \delta_{t=0}, & (x,t) \in \mathbb{R}^3 \times \mathbb{R},\\
U_y(x,t)=0 & \text{for } x \in \mathbb{R}^3 \text{ and } t\ll0.
\end{cases} 
\label{AE:28}
\end{equation*} 
For $u_y = u_y(x,t)$, we consider the wave equation
\begin{equation*}
\begin{cases}
\partial_t^2 u_y - \nabla \cdot (\chi(\mathbb{R}^3 \setminus \overline{D}) + k \chi(D)) \nabla u_y = \delta_{x=y}\delta_{t=0} & \text{ in } \mathbb{R}^3 \times (0, \infty), \\
u_y(x,t) = 0 & \text{ for } x \in \mathbb{R}^3 \text{ and } t \ll 0.
\end{cases}
\label{AE:32}
\end{equation*}
For $\rho>0$, let $P_{\rho}$ be the operator defined in \eqref{AE:29}. Since
$$\widehat{U}_y(x,\omega) := \int_{\mathbb{R}} e^{i \omega t} U_y(x,t) \; dt = \frac{e^{i \omega |x-y|}}{4 \pi |x-y|}=V_y (x, \omega) ,$$
it follows that
\begin{equation*}
P_{\rho}[U_y](x,t) = \frac{\psi_{\rho} (t-|x-y|)}{4 \pi |x-y|}, 
\label{AE:30}
\end{equation*}
and $P_{\rho}[U_y]$ satisfies
\begin{equation*}
(\partial_t^2 - \Delta ) P_{\rho}[U_y] (x,t) = \delta_{x=y}\psi_{\rho}(t) \text{ in } \mathbb{R}^3 \times \mathbb{R}. 
\label{AE:31}
\end{equation*}
Moreover, we have that
$$P_{\rho} [u_y] (x,t) = \int\limits_{|\omega| \leq \rho} e^{-i\omega t} v_y(x,\omega) \; d\omega,$$
where $v_y$ is the solution to the three-dimensional problem in the frequency domain.

Similarly to the two-dimensional case, we define the TDPTs as a truncated Fourier transform of FDPTs.
\begin{definition}\label{def:3DTDPTs}
	For $\rho<1/\varepsilon$ and multi-indices $\alpha$ and $\beta$, the three-dimensional TDPTs, $P_{\rho} [W_{\alpha \beta}]$, are defined as follows:
	\begin{equation}
	P_{\rho} [W_{\alpha \beta}](x,t) = \int\limits_{|\omega| \leq \rho} e^{-i\omega t} \widehat{W}_{\alpha \beta} \; d\omega,
	\label{3TDPTs}
	\end{equation}
	where $\widehat{W}_{\alpha \beta}$ are the FDPTs given by \eqref{eq:What}.
\end{definition}
We refer to Remarks \ref{rem:threshold-condition} and \ref{rem:notation}, where the given definition and notation are clarified. 

Proceeding as in Section \ref{sec:tdae}, the following expansion of $P_{\rho}[u_y-U_y](x,t)$ in terms of the TDPTs is readily obtained.
\begin{theo}\label{theo2} 
	For $0<\gamma<1$, the following asymptotic expansion holds:	
	\begin{equation*}
	\label{AE:38}
	\begin{aligned}
	& P_{\rho}[u_y](x,t) = P_{\rho}[U_y](x,t) \\ & + \varepsilon \sum\limits_{|\beta|=0}^{n+1} \sum\limits_{|\alpha|=0}^{n-|\beta|+1} \frac{\varepsilon^{|\alpha|+|\beta|}}{\alpha! \; \beta!} \int_{\mathbb{R}} \partial_z^\beta P_{\rho}[U_z] (x,\tau) \left(\int_{\mathbb{R}} \partial_z^\alpha P_{\rho}[U_y] (z, t-\tau - \tau') P_{\rho} [W_{\alpha \beta}](\tau') \; d\tau' \right) \; d\tau \\ & + O\left(\varepsilon^{(n +3) (1 - \gamma)}\right),
	\end{aligned}
	\end{equation*}
	where $x \in \mathbb{R}^3 \setminus \overline{D}$, $D=\varepsilon B +z$, $|B|=1$, $P_{\rho} [W_{\alpha \beta}]$ are the TDPTs defined in Definition \eqref{def:3DTDPTs} and $\rho=O(\varepsilon^{-\gamma})$.
\end{theo}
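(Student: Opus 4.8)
The plan is to mirror, in the three-dimensional setting, the argument already carried out in Section~\ref{sec:tdae} for $d=2$, exploiting the fact that all the necessary ingredients are now in place: Theorem~\ref{theo1} supplies the frequency-domain expansion with leading power $\varepsilon^{|\alpha|+|\beta|+1}$ and remainder $O(\varepsilon^{n+3}(1+\omega^{n+2}))$, and Definition~\ref{def:3DTDPTs} fixes the notion of the three-dimensional TDPTs. First I would apply the truncated Fourier operator $P_\rho$ termwise to the expansion in Theorem~\ref{theo1}, writing
\begin{equation*}
P_\rho[u_y - U_y](x,t) = \int\limits_{|\omega|\le \rho} e^{-i\omega t}\bigl(v_y(x,\omega) - V_y(x,\omega)\bigr)\,d\omega,
\end{equation*}
and then splitting the integrand into the finite double sum and the remainder $R(x,\omega)$.

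Next I would estimate the contribution of the remainder. Since $\rho = O(\varepsilon^{-\gamma})$ and $|R(x,\omega)| = O(\varepsilon^{n+3}(1+\omega^{n+2}))$ uniformly, integrating over $|\omega|\le \rho$ produces a factor controlled by $\rho^{\,n+3}$ against the highest power of $\omega$, so that
\begin{equation*}
\int\limits_{|\omega|\le \rho} e^{-i\omega t} R(x,\omega)\,d\omega = O\!\left(\varepsilon^{n+3}\,\rho^{\,n+3}\right) = O\!\left(\varepsilon^{(n+3)(1-\gamma)}\right),
\end{equation*}
which gives exactly the error term claimed in Theorem~\ref{theo2}. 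The factor $\varepsilon$ pulled out in front of the double sum reflects the shift from the two-dimensional leading power $\varepsilon^{|\alpha|+|\beta|}$ to the three-dimensional one $\varepsilon^{|\alpha|+|\beta|+1}$ in Theorem~\ref{theo1}.

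The heart of the computation is the convolution identity for the main sum. For each pair $(\alpha,\beta)$ I would recognize that $\partial_z^\alpha V_y(z,\omega)$, $\partial_z^\beta \Gamma_\omega(x,z)$ and $\widehat{W}_{\alpha\beta}(\omega)$ are each Fourier transforms in $\omega$ of time-domain quantities, namely $\partial_z^\alpha P_\rho[U_y](z,\cdot)$, $\partial_z^\beta P_\rho[U_z](x,\cdot)$ and $P_\rho[W_{\alpha\beta}]$, using the three-dimensional identity $\widehat{U}_y(x,\omega)=V_y(x,\omega)$ already recorded in the appendix. The product of three frequency-domain factors under the band-limited inverse transform $P_\rho$ therefore becomes a double convolution in the time variable, yielding the nested integrals over $\tau$ and $\tau'$ displayed in the statement. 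Care is needed because $P_\rho$ is a band-limited (not full) inverse Fourier transform, so the factorization into convolutions is only consistent because each factor is itself band-limited to $[-\rho,\rho]$; this is precisely the role played by writing every term through $P_\rho[U_y]$, $P_\rho[U_z]$ and $P_\rho[W_{\alpha\beta}]$ rather than their untruncated counterparts.

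The main obstacle I anticipate is bookkeeping rather than analysis: ensuring that the band-limiting is handled consistently so that the convolution identity is exact on $[-\rho,\rho]$, and verifying that the remainder estimate survives integration against $e^{-i\omega t}$ with the correct power of $\rho$. Since the excerpt states that ``only minor changes in the proofs are required'' and the two-dimensional analogue (Theorem~\ref{theo4}) has already been established by exactly this route, I would present the argument by pointing to Section~\ref{sec:tdae}, indicating the three substitutions that differ—the Green's function $U_y$, the leading power of $\varepsilon$, and the remainder exponent—and then asserting that the remaining steps are identical. This keeps the proof short while making the structural parallel transparent.
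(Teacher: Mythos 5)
Your proposal is correct and takes essentially the same route as the paper: the paper's entire proof of Theorem~\ref{theo2} is the single remark that one proceeds as in Section~\ref{sec:tdae}, substituting the three-dimensional ingredients (Theorem~\ref{theo1}, the 3D source $U_y$ with $\widehat{U}_y = V_y$, and Definition~\ref{def:3DTDPTs}). Your spelled-out steps---applying $P_\rho$ termwise, the remainder bound $\varepsilon^{n+3}\rho^{n+3} = \varepsilon^{(n+3)(1-\gamma)}$, the extra factor $\varepsilon$ from the leading power $\varepsilon^{|\alpha|+|\beta|+1}$, and the product-to-double-convolution identity---are exactly the computations carried out in Section~\ref{sec:tdae} for $d=2$.
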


\textsc{Department of Mathematics, ETH Z\"urich, R\"amistrasse 101, CH-8092 Z\"urich, Switzerland}

\textit{Email addresses}\textup{\nocorr:\\ \texttt{lorenzo.baldassari@sam.math.ethz.ch}\\
\texttt{andrea.scapin@sam.math.ethz.ch}} 
\end{document}